\definecolor{rojo}{RGB}{221,0,0}
\tikzset{
>=stealth',
help lines/.style={dashed, thick},
axis/.style={<->},
important line/.style={thick},
connection/.style={thick, dotted},
}
\newtheorem{theorem}{Theorem}
\newtheorem{lemma}[theorem]{Lemma}
\newtheorem{corollary}[theorem]{Corollary}
\theoremstyle{definition}
\theoremstyle{remark}
\numberwithin{equation}{section}
\DeclareMathOperator{\supp}{supp}
\newcommand{\abs}[1]{\left\vert#1\right\vert}
\newcommand{\set}[1]{\left\{#1\right\}}
\newcommand{\proin}[2]{\left<#1,#2\right>}
\newcommand{\norm}[1]{\left\Vert#1\right\Vert}
\begin{document}
\title[]{The dyadic fractional diffusion kernel as a central limit}
%
%


\author[]{Hugo Aimar}
\email{haimar@santafe-conicet.gov.ar}
\author[]{Ivana G\'{o}mez}
\email{ivanagomez@santafe-conicet.gov.ar}
\author[]{Federico Morana}
\email{fmorana@santafe-conicet.gov.ar}
\thanks{The research was supported  by CONICET, ANPCyT (MINCyT) and UNL}
\subjclass[2010]{Primary 60F05,60G52, 35R11}
\keywords{central limit theorem; dyadic diffusion; fractional diffusion; stable processes; wavelet analysis}

\begin{abstract}
In this paper we obtain the fundamental solution kernel of dyadic diffusions in $\mathbb{R}^+$ as a Central Limit of dyadic mollification of iterations of stable Markov kernels. The main tool is provided by the substitution of classical Fourier analysis by Haar wavelet analysis.
\end{abstract}

\maketitle

\section{Introduction}
The analysis of solutions of nonlocal problems in PDE, has received new impulse after the remarkable results obtained by Caffarelli and Silvestre \cite{CaSi07}. For a probabilistic view of this problems see \cite{Val09}, \cite{Valdinocibook16}. Recently in \cite{AcAimFCAA},\cite{AcAimCzech},\cite{AiBoGo13}, a dyadic version of the fractional derivative was introduced and an associated diffusion was solved.

The classical diffusion process, described by  the heat equation $\tfrac{\partial u}{\partial t}=\Delta u$, where  $\Delta$
denotes the space Laplacian, has as a fundamental solution  the Weierstrass kernel $W_t(x)= (4\pi
t)^{-d/2}e^{-\abs{x}^2/4t}$, which is the central limit distribution, for $n\to\infty$, of $\sqrt{n}^{-1}\sum_{j=1}^{n}X_j$,
where the $X_j$'s are identically distributed independent random variables with finite variance $t$ and vanishing mean value.
For our later analysis it is convenient to write the convergence in distribution of $n^{-1/2}\sum_{j=1}^n X_j$ to $W_t$ in terms of the common distribution of the random variables $X_j$, $j\in \mathbb{N}$. For the sake of simplicity let us assume that this distribution is given by the density $g$ in $\mathbb{R}^d$. In other words, $\mathscr{P}(\{X_j\in B\})=\int_Bg(x)dx$ where $B$ is a Borel set in $\mathbb{R}^d$. Hence since the random variables $X_j$ are independent the distribution of $S_n=\sum_{j=1}^nX_j$ is given by the convolution $g^n$ of $g$ $n$-times. Precisely, with $g^n=g\ast\cdots\ast g$ $n$-times, we have that $\mathscr{P}(\{S_n\in B\})=\int_Bg^n(x)dx$. On the other hand, $\mathscr{P}(\{n^{-1/2}\sum_{j=1}^nX_j\in B\})=\mathscr{P}(\{S_n\in\sqrt{n}B\})=\int_B(g^n)_{\sqrt{n}}(x)dx$, with $(g^n)_{\sqrt{n}}$ the mollification of $g^n$ by $\sqrt{n}$ in $\mathbb{R}^d$. Precisely, $(g^n)_{\sqrt{n}}(x)=n^{-d/2}g^n(\sqrt{n}x)$. These observations allows to read the CLT as a vague or Schwartz weak convergence of $(g^n)_{\sqrt{n}}(x)$ to $W_t(x)$ when $n\to\infty$. For every $f$ continuous and compactly supported in $\mathbb{R}^d$, we have that $\int_{\mathbb{R}^d}(g^n)_{\sqrt{n}}(x)f(x)\to\int_{\mathbb{R}^d}W_t(x)f(x) dx$ as $n\to\infty$. Since we shall be working in a non-translation invariant setting, to get the complete analogy we still rewrite the CLT as the weak convergence of the sequence of Markov kernel $K^n_{\sqrt{n}}(x,y)=(g^n)_{\sqrt{n}}(x-y)$ to the  Markov Weierstrasss kernel $W_t(x-y)$. The kernel $K^n_{\sqrt{n}}(x,y)=\idotsint_{\mathbb{R}^{d-1}} g_{\sqrt{n}}(x-x_1)g_{\sqrt{n}}(x_1-x_2)\cdots g_{\sqrt{n}}(x_{n-1}-y)dx_1dx_2\cdots dx_{n-1}$ corresponds to the kernel of the $n$-th iteration of the operator $T_{\sqrt{n}}f(x)=\int_{\mathbb{R}^d}g_{\sqrt{n}}(x-y)f(y) dy$. The difference in the rhythms of the upper index $n$ of the iteration and the  lower index $\sqrt{n}$ of mollification is related to the property of finite variance of $g$. In the problems considered here the Markov kernels involved have heavy tails and the central equilibria takes place for different proportions between iteration and mollification. There are many books where the classical CLT and some of its extensions are masterly exposed. Let us refer to \cite{Chungbook} as one of them.

In this paper we shall be concerned with diffusions of fractional type associated with dyadic differentiation in the space. The basic setting for our diffusions is $\mathbb{R}^+=\{x\in \mathbb{R}: x>0\}$. In \cite{AcAimCzech} it is proved that the function $u(x,t)$ defined for $x\in \mathbb{R}^+$ and $t>0$, given by
\begin{equation*}
u(x,t)=\sum_{h\in\mathscr{H}}e^{-t\abs{I(h)}^{-s}}\proin{u_0}{h}h(x),
\end{equation*}
with $\mathscr{H}$ the standard Haar system in $L^2(\mathbb{R}^+)$, $I(h)$ the support of $h$ and
$\proin{u_0}{h}=\int_{\mathbb{R}^+}u_0(x)h(x) dx$, solves the problem
\begin{equation*}
\left
\{\begin{array}{ll}
\frac{\partial u}{\partial t}=D^{s} u,\, & x\in\mathbb{R}^{+}, t>0;\\

u(x,0)=u_0(x),\,  & x\in \mathbb{R}^+.
\end{array}
\right.
\end{equation*}
with
\begin{equation}\label{eq:derivativefractionalDs}
D^{s}g(x)=\int_{y\in \mathbb{R}^+}\frac{g(x)-g(y)}{\delta (x,y)^{1+s}} dy
\end{equation}
for $0<s<1$ and $\delta(x,y)$ the dyadic distance in $\mathbb{R}^{+}$ (see Section~\ref{sec:dyadycAnalysis} for definitions). The main point in the prove of the above statement is provided by the spectral analysis for $D^s$ in terms of Haar functions. In fact, $D^s h=\abs{I(h)}^{-s}h$. When $0<s<1$, since $h$ is a Lipschitz function with respect to $\delta$, the integral in \eqref{eq:derivativefractionalDs} defining $D^sh$ is absolutely convergent. For the case $s=1$ this integral is generally not convergent, nevertheless the operator $D^1$ is still well defined on the Sobolev type space of those function in $L^2(\mathbb{R}^+)$ such that the Haar coefficients $\proin{f}{h}$ satisfy the summability condition $\sum_{h\in\mathscr{H}}\tfrac{\abs{\proin{f}{h}}^2}{\abs{I(h)}^2}<\infty$. For those functions $f$ the first order nonlocal derivative is given by $D^1 f=\sum_{h\in\mathscr{H}}\tfrac{\proin{f}{h}}{\abs{I(h)}}h$. Moreover, with $u_0\in L^2(\mathbb{R}^+)$, the function
\begin{equation*}
u(x,t)=\int_{\mathbb{R}^+}K(x,y;t)u_0(y) dy,
\end{equation*}
with
\begin{equation}\label{eq:NucleoHaarDifusiones}
K(x,y;t)=\sum_{h\in\mathscr{H}}e^{-t\abs{I(h)}^{-1}}h(x)h(y),
\end{equation}
solves
\begin{equation*}
(P) \left
\{\begin{array}{ll}
\frac{\partial u}{\partial t}=D^{1} u,\, & x\in\mathbb{R}^{+}, t>0;\\

u(x,0)=u_0(x),\,  & x\in \mathbb{R}^+.
\end{array}
\right.
\end{equation*}
Notice that for each $t>0$ the function of $x\in \mathbb{R}^+$, $u(x,t)$ is in the dyadic Sobolev space and its $D^1$ space derivative belongs to $L^2(\mathbb{R}^+)$.

The kernel $K(\cdot,\cdot;t)$ for fixed $t>0$ is not a convolution kernel. Nevertheless it can be regarded as a Markov transition kernel which, as we shall prove, depends only on $\delta(x,y)$.

In this note we prove that the Markov kernel family $K(\cdot,\cdot;t)$ is the central limit of adequate simultaneous iteration and mollification
of elementary dyadic stable Markov kernels. We shall precisely define stability later, but heuristically it means that the kernel behaves at infinity like a power law of the dyadic distance. The main result is contained in Theorem~\ref{thm:mainresult} in Section~\ref{sec:mainresult}. The basic tool for the proof of our results is the Fourier Haar analysis induced on $\mathbb{R}^+$ by the orthonormal basis of Haar wavelets.

The paper is organized as follow. In Section~\ref{sec:dyadycAnalysis} we introduce the basic facts from dyadic analysis on $\mathbb{R}^+$, in particular the Haar system as an orthonormal basis for $L^2(\mathbb{R}^+)$ and as an unconditional basis for $L^p(\mathbb{R}^+)$, $1<p<\infty$. Section~\ref{sec:Markovdyadickernels} is devoted to introduce the Markov type dyadic kernels. The spectral analysis of the integral operators generated by Markov type dyadic kernels is considered in \S~\ref{sec:spectralanalysis}. Section~\ref{sec:stability} is devoted to introduce the concept of stability and to prove that the kernel in \eqref{eq:NucleoHaarDifusiones} is $1$-stable with parameter $\tfrac{2}{3}t$. The iteration and mollification operators and their relation with stability are studied in Section~\ref{sec:iterationmollification}. Finally in Section~\ref{sec:mainresult} we state and prove our main result: spectral and $L^p(\mathbb{R}^+)$ ($1<p<\infty$) convergence to the solution of (P).

\section{Some basic dyadic analysis}\label{sec:dyadycAnalysis}
Let $\mathbb{R}^+$ denote the set of nonnegative real numbers. A dyadic interval is a subset of $\mathbb{R}^+$ that can be written as $I=I^j_k=[k2^{-j},(k+1)2^{-j})$ for some integer $j$ and some nonnegative integer $k$. The
family $\mathcal{D}$ of all dyadic intervals can be organized by levels of resolution as follows; $\mathcal{D}=\cup_{j\in \mathbb{Z}}\mathcal{D}^j$, where $\mathcal{D}^j=
\set{I^j_k: k=0,1,2,\ldots}$. The dyadic distance induced on $\mathbb{R}^+$ by $\mathcal{D}$ and the Lebesgue measure is defined by $\delta(x,y)=\inf\set{\abs{I}: I\in
\mathcal{D}, x\in I, y\in I}$ where $\abs{E}$ denotes the one dimensional Lebesgue measure of $E$. It is easy to check that $\delta$ is a distance (ultra-metric) on $\mathbb{R}^+$
 and that, since $\abs{x-y}=\inf\{\abs{J}: x\in J, y\in J, J=[a,b), 0\leq a<b<\infty\}$, $\abs{x-y}\leq\delta(x,y)$. Of course the two distances are not equivalent. Pointwise the function $\delta(x,y)$ is larger than the usual distance $d(x,y)=\abs{x-y}$. Set $B_\delta(x,r)=\{y\in \mathbb{R}^+: \delta(x,y)<r\}$ to denote the $\delta$-ball centered a $x$ with positive radius $r$. Then $B_\delta(x,r)$ is the largest dyadic interval containing $x$ with Lebesgue measure less than $r$. For $r>0$, let $j\in \mathbb{Z}$ be such that $2^j<r\leq 2^{j+1}$. Then, for $x\in \mathbb{R}^+$, $B_\delta(x,r)=I$ with $x\in I\in\mathcal{D}$, $2^j=\abs{I}<r\leq 2^{j+1}$. So that $\tfrac{r}{2}\leq\abs{B_\delta(x,r)}<r$. This normality property of $(\mathbb{R}^+,\delta)$ equipped with Lebesgue measure shows that the $\delta$-Hausdorff dimension of intervals in $\mathbb{R}^+$ is one. In particular the integral singularities that negative powers of $\delta$ and $d$ produce have
the same orders. Precisely, for fixed $x\in \mathbb{R}^+$ the functions of $y\in \mathbb{R}^+$ defined by $\delta^{\alpha}(x,y)$ and $\abs{x-y}^\alpha$ have the same local and global integrability properties for $\alpha\in \mathbb{R}$.

\smallskip
\begin{lemma}\label{lemma:deltaintegrability}
\quad
\begin{enumerate}[(a)]
\item The level sets $L(\lambda)=\{(x,y):\delta(x,y)=\lambda\}$ are empty if $\lambda$ is not an integer power of two. On the other hand
$L(2^j)=\cup_{I\in\mathcal{D}^j}(I_l\times I_r)\cup (I_r\times I_l)$ with $I_l$ and $I_r$, the left and right halves of $I\in\mathcal{D}^j$. Hence, $\delta(x,y)=\sum_{j\in \mathbb{Z}}2^j\chi_{L(2^j)}(x,y)$.

\medskip
\item For $x\in \mathbb{R}^+$ and $r>0$ we have,
\begin{enumerate}[b-i)]

\medskip
\item
$\frac{c(\alpha)}{2^{1+\alpha}}r^{1+\alpha}\leq \int_{y\in B_{\delta}(x,r)} \delta^{\alpha}(x,y) dy \leq c(\alpha)r^{1+\alpha}$
for $\alpha>-1$ with $c(\alpha)=2^{-1}(1-2^{-(1+\alpha)})^{-1}$;

\medskip
\item
$\int_{B_{\delta}(x,r)} \delta^{\alpha}(x,y) dy= +\infty$
for $\alpha\leq -1$;

\medskip
\item
$\tilde{c}(\alpha)r^{1+\alpha}\leq\int_{\{y: \delta(x,y)\geq r\}} \delta^{\alpha}(x,y) dy\leq\frac{\tilde{c}(\alpha)}{2^{1+\alpha}} r^{1+\alpha}$
for $\alpha < -1$ with $\tilde{c}(\alpha)=2^{-1}(1-2^{1+\alpha})^{-1}$;

\medskip
\item
$\int_{\{y: \delta(x,y)\geq r\}} \delta^{\alpha}(x,y) dy= +\infty$
for $\alpha\geq -1$.
\end{enumerate}
\end{enumerate}
\end{lemma}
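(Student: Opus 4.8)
The plan rests on the ultrametric structure of the dyadic grid: any two intervals of $\mathcal{D}$ are either disjoint or nested, so the dyadic intervals containing a given point form a single chain, totally ordered by inclusion, and for $x\neq y$ there is a unique smallest dyadic interval $I(x,y)$ containing both, with $\delta(x,y)=\abs{I(x,y)}$. Part (a) follows quickly. Since each $\abs{I}$ is an integer power of $2$, $\delta$ takes (off the diagonal) only such values, whence $L(\lambda)=\emptyset$ when $\lambda$ is not of the form $2^j$. Moreover a dyadic interval $I$ containing $x$ and $y$ is the \emph{smallest} one with this property precisely when $x$ and $y$ lie on opposite sides of the midpoint of $I$, i.e.\ one of them in $I_l$ and the other in $I_r$; hence $\delta(x,y)=2^j$ exactly when $(x,y)\in(I_l\times I_r)\cup(I_r\times I_l)$ for some $I\in\mathcal{D}^j$, which is the asserted description of $L(2^j)$. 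Since the sets $L(2^j)$ are pairwise disjoint, their union is $\{(x,y):x\neq y\}$, and $\delta=2^j$ on $L(2^j)$, the representation $\delta(x,y)=\sum_{j\in\mathbb{Z}}2^j\chi_{L(2^j)}(x,y)$ follows at once.

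For part (b), fix $x\in\mathbb{R}^+$ and $r>0$ and pick $j\in\mathbb{Z}$ with $2^j<r\le 2^{j+1}$; as recalled just before the lemma, $B_\delta(x,r)$ is then the dyadic interval of length $2^j$ containing $x$, and $\tfrac{r}{2}\le 2^j<r$. Let $I_k$ denote the dyadic interval of length $2^{j-k}$ containing $x$ (for $k\ge 0$) and $J_m$ the one of length $2^{j+m}$ containing $x$ (for $m\ge 0$), so that $J_0=I_0=B_\delta(x,r)$. The key observation is that $\delta(x,\cdot)$ is constant on the ``dyadic annuli'' of this chain: for $y\in I_k\setminus I_{k+1}$, the half of $I_k$ not containing $x$ and of Lebesgue measure $2^{j-k-1}$, one has $\delta(x,y)=2^{j-k}$, while for $y\in J_m\setminus J_{m-1}$, of measure $2^{j+m-1}$, one has $\delta(x,y)=2^{j+m}$. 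Consequently, up to the null set $\{x\}$,
\[
\int_{B_\delta(x,r)}\delta^{\alpha}(x,y)\,dy=\tfrac{1}{2}\sum_{k\ge 0}2^{(1+\alpha)(j-k)},
\]
and, since $\{y:\delta(x,y)\ge r\}=\mathbb{R}^+\setminus I_0=\bigsqcup_{m\ge 1}(J_m\setminus J_{m-1})$,
\[
\int_{\{\delta(x,y)\ge r\}}\delta^{\alpha}(x,y)\,dy=\tfrac{1}{2}\sum_{m\ge 1}2^{(1+\alpha)(j+m)}.
\]
The first geometric series converges precisely for $1+\alpha>0$ and the second precisely for $1+\alpha<0$; in the convergent case each equals a fixed positive constant times $2^{(1+\alpha)j}$, and then inserting $\tfrac{r}{2}\le 2^j<r$, together with the monotonicity of $t\mapsto t^{1+\alpha}$, yields the two-sided bounds b-i) and b-iii). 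In the remaining cases, namely $\alpha\le -1$ for the first integral and $\alpha\ge -1$ for the second, the terms do not tend to $0$, the series diverge, and the integrals equal $+\infty$: these are b-ii) and b-iv).

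I do not anticipate a genuine obstacle; the care required is essentially bookkeeping: running the refining chain $I_0\supset I_1\supset\cdots$ inside $B_\delta(x,r)$ and the coarsening chain $I_0\subset J_1\subset\cdots$ on its complement with the correct index ranges (in particular the outer sum starts at $m=1$, since $\delta(x,y)\ge r$ already forces $\delta(x,y)\ge 2^{j+1}$), and keeping track of the precise power of $2$ that the scale comparison $r\asymp 2^j$ contributes to the constants in b-i) and b-iii).
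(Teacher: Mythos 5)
Your argument follows essentially the same route as the paper's: decompose $\mathbb{R}^+$ into the dyadic annuli on which $\delta(x,\cdot)$ is constant (the paper's sections of $L(2^j)$, your $I_k\setminus I_{k+1}$ and $J_m\setminus J_{m-1}$), express each integral as a geometric series in $2^{(1+\alpha)j}$, and read off convergence from the sign of $1+\alpha$. Part (a) and items b-i), b-ii), b-iv) go through exactly as you indicate.

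The bookkeeping you defer in b-iii), however, does not close as stated. With your convention $2^j<r\le 2^{j+1}$, your (correct) series gives
\begin{equation*}
\int_{\{y:\delta(x,y)\ge r\}}\delta^{\alpha}(x,y)\,dy
=\tfrac12\sum_{m\ge1}2^{(1+\alpha)(j+m)}
=\tilde c(\alpha)\,2^{(1+\alpha)(j+1)},
\end{equation*}
and since $r\le 2^{j+1}$ and $1+\alpha<0$ this quantity is $\le\tilde c(\alpha)r^{1+\alpha}$; so $\tilde c(\alpha)r^{1+\alpha}$ comes out as an \emph{upper} bound, not the lower bound claimed in b-iii). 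The two-sided estimate your computation actually yields is $2^{1+\alpha}\tilde c(\alpha)r^{1+\alpha}<\int\le\tilde c(\alpha)r^{1+\alpha}$, a window shifted down by the factor $2^{1+\alpha}$ from the printed one. This is not a defect of your method: inequality b-iii) as printed fails for $r$ strictly between powers of two. For instance with $\alpha=-2$ one has $\tilde c(-2)=1$, and for $r=3$ the set $\{\delta(x,y)\ge3\}$ equals $\{\delta(x,y)\ge4\}$, giving $\int=\tfrac12\sum_{l\ge2}2^{-l}=\tfrac14<\tfrac13=\tilde c(-2)\,r^{-1}$. The paper's own proof conceals this by evaluating $\tfrac12\lim_{b\to\infty}S(\alpha;r,b)$ as $\tfrac12\sum_{j\ge j_0}2^{(1+\alpha)j}$ under the convention $2^{j_0}\le r<2^{j_0+1}$, an identity that holds only when $r=2^{j_0}$. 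So you should either record the bound you actually proved, or note explicitly that b-iii) with the printed constants is valid when $r$ is a dyadic power; what the rest of the paper uses is only the order of magnitude $r^{1+\alpha}$, which both versions deliver.
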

\begin{proof}[Proof of (a)] Let $j\in \mathbb{Z}$ fixed. Then $\delta(x,y)=2^j$ if and only if $x$ and $y$ belong to the same $I\in\mathcal{D}^j$, but they do not belong to the same half of $I$. In other words, $(x,y)\in I_l\times I_r$ or $(x,y)\in I_r\times I_l$.

\noindent\textit{Proof of (b).} Fix $x\in \mathbb{R}^+$. Take $0<a<b<\infty$. Then, from \textit{(a)},
\begin{align*}
\int_{\{y\in B_\delta(x,b)\setminus B_\delta(x,a)\}}\delta^\alpha(x,y)dy
&=\int_{\{y: a\leq \delta(x,y)<b\}}\delta^\alpha(x,y)dy\\
&= \sum_{\{j\in \mathbb{Z}: a\leq 2^j<b\}}\int_{\{y:\delta(x,y)=2^j\}}2^{\alpha j}dy\\
&=\frac{1}{2}\sum_{\{j\in \mathbb{Z}: a\leq 2^j<b\}}2^{(1+\alpha)j}\\
&=\frac{1}{2}S(\alpha;a,b).
\end{align*}
When $\alpha\geq -1$, then $S(\alpha;a,b)\to +\infty$ for $b\to\infty$, for every $a$. Thus proves \textit{(iv)}. When $\alpha\leq -1$ then $S(\alpha;a,b)\to+\infty$ for $a\to 0$, for every $b$. For $\alpha>-1$, we have with $2^{j_0}\leq r<2^{j_0+1}$ that
\begin{equation*}
\int_{B_\delta(x,r)}\delta^\alpha(x,y)dy =\frac{1}{2}\lim_{a\to 0}S(\alpha;a,b)=\frac{1}{2}\sum_{j\leq j_0(r)}2^{(1+\alpha)j}
=\frac{1}{2}\frac{1}{1-2^{-(1+\alpha)}}2^{(1+\alpha)j_0}=c(\alpha)2^{(1+\alpha)j_0}.
\end{equation*}
Hence
\begin{equation*}
\frac{c(\alpha)}{2^{1+\alpha}}r^{1+\alpha}\leq \int_{y\in B_{\delta}(x,r)} \delta^{\alpha}(x,y) dy \leq c(\alpha)r^{1+\alpha}.
\end{equation*}
For $\alpha<-1$ we have, with $2^{j_0}\leq r<2^{j_0+1}$, that
\begin{equation*}
\int_{\delta(x,y)\geq r}\delta^{\alpha}(x,y)dy=\frac{1}{2}\lim_{b\to\infty}S(\alpha;r,b)=\frac{1}{2}\sum_{j\geq j_0(r)}(2^{1+\alpha})j=
\frac{1}{2}\frac{1}{1-2^{1+\alpha}}2^{(1+\alpha)j_0}=\tilde{c}(\alpha)2^{(1+\alpha)j_0},
\end{equation*}
so that
\begin{equation*}
\frac{\tilde{c}(\alpha)}{2^{1+\alpha}} r^{1+\alpha}\geq\int_{\{y: \delta(x,y)\geq r\}} \delta^{\alpha}(x,y) dy\geq\tilde{c}(\alpha)r^{1+\alpha}.
\end{equation*}
\end{proof}

The distance $\delta$ is not translation invariant. In fact, while for small positive $\varepsilon$, $\delta(\tfrac{1}{2}-\varepsilon,\tfrac{1}{2}+\varepsilon)=1$, $\delta(\tfrac{1}{2}+\tfrac{1}{2}-\varepsilon,\tfrac{1}{2}+\tfrac{1}{2}+\varepsilon)=2$. Neither is $\delta$ positively homogeneous. Nevertheless the next statement contains a useful property of dyadic homogeneity.

\begin{lemma}\label{lemma:deltahomogeneity}
Let $j\in \mathbb{Z}$ be given. Then, for $x$ and $y$ in $\mathbb{R}^+$, $\delta(2^jx,2^jy)=2^j\delta(x,y)$.
\end{lemma}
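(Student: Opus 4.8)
The plan is to exploit the fact that multiplication by $2^j$ is a bijection of $\mathbb{R}^+$ that permutes the family $\mathcal{D}$ of dyadic intervals and scales Lebesgue measure by the factor $2^j$; once this is in hand, the claimed identity follows by transporting the infimum that defines $\delta$.

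First I would check the action of the dilation $x\mapsto 2^j x$ on dyadic intervals. If $I=I^m_k=[k2^{-m},(k+1)2^{-m})\in\mathcal{D}^m$, then $\set{2^j t : t\in I}=[k2^{-(m-j)},(k+1)2^{-(m-j)})=I^{m-j}_k$, which is again a dyadic interval, of resolution level $m-j$ and with the same nonnegative integer index $k$. Since $m\mapsto m-j$ is a bijection of $\mathbb{Z}$, the assignment $I\mapsto 2^jI:=\set{2^jt:t\in I}$ is a bijection of $\mathcal{D}$ onto itself, and clearly $\abs{2^jI}=2^j\abs{I}$.

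Next, since $x\in I$ if and only if $2^jx\in 2^jI$, for fixed $x,y\in\mathbb{R}^+$ the map $I\mapsto 2^jI$ restricts to a bijection between $\set{I\in\mathcal{D}:x\in I,\ y\in I}$ and $\set{J\in\mathcal{D}:2^jx\in J,\ 2^jy\in J}$ that multiplies lengths by $2^j$. Taking infima on both sides gives
\begin{equation*}
\delta(2^jx,2^jy)=\inf\set{\abs{J}: J\in\mathcal{D},\ 2^jx\in J,\ 2^jy\in J}=\inf\set{2^j\abs{I}: I\in\mathcal{D},\ x\in I,\ y\in I}=2^j\delta(x,y).
\end{equation*}
I do not expect any real obstacle here; the only point deserving a word of care is verifying that $2^jI$ is a bona fide member of $\mathcal{D}$ in the sense of the definition --- namely that it has the form $[k'2^{-m'},(k'+1)2^{-m'})$ with $m'\in\mathbb{Z}$ and $k'$ a nonnegative integer --- which is exactly the computation above with $m'=m-j$ and $k'=k$.
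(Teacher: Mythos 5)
Your proof is correct and follows essentially the same route as the paper: both arguments rest on the observation that $x\mapsto 2^jx$ sends a dyadic interval $I$ to a dyadic interval $2^jI$ of length $2^j\abs{I}$ and that $x\in I$ iff $2^jx\in 2^jI$. The paper phrases the conclusion by identifying the minimizing interval (the one in which $x$ and $y$ lie in different halves) and showing that minimality is preserved under the dilation, whereas you transport the infimum directly through the bijection of $\mathcal{D}$; this is a slightly cleaner packaging of the same idea.
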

\begin{proof}
Notice first that since $x=y$ is equivalent to $2^jx=2^jy$, we may assume $x\neq y$. Since for $x$ and $y$ in $I\in \mathcal{D}$ we certainly have that $2^jx$ and $2^jy$ belong to $2^jI$, and the measure of $2^jI$ is $2^j$ times the measure of $I$, in order to prove the dyadic homogeneity of $\delta$, we only have to observe that the multiplication by $2^j$ as an operation on $\mathcal{D}$ preserves the order provided by inclusion. In particular $x$ and $y$ belong to $I$ but $x$ and $y$ do not belong to the same half $I_l$ or $I_r$ of $I$, if and only if $2^jx$ and $2^jy$ belong to $2^jI$ but $2^jx$ and $2^jy$ do not belong to the same half of $2^jI$.
\end{proof}

As in the classical case of the Central Limit Theorem, Fourier Analysis will play an important role in our
further development. The basic difference is that in our context the trigonometric expansions are substituted by the most
elementary wavelet analysis, the associated to the Haar system. Let us introduce the basic notation. Set $h^0_0(x)=\chi_{[0,1/2)}(x)-\chi_{[1/2,1)}(x)$
and, for $j\in \mathbb{Z}$ and $k=0,1,2,3,\ldots$; $h^j_k(x)=2^{j/2}h^0_0(2^jx-k)$. Notice that $h^j_k$ has $L^2$-norm equal to one for every $j$ and $k$. Moreover, $h^j_k$ is supported in $I=I^j_k\in \mathcal{D}^j$. Write $\mathscr{H}$ to denote the sequence of all those Haar wavelets. For $h\in\mathscr{H}$ we shall use the notation $I(h)$ to denote the interval $I$ in $\mathcal{D}$ for which $\supp h = I$. Also $j(h)$ is the only resolution level $j\in \mathbb{Z}$ such that $I(h)\in \mathcal{D}^j$.

The basic analytic fact of the system $\mathscr{H}$ is given by its basic character. In fact, $\mathscr{H}$ is an orthonormal basis for $L^2(\mathbb{R}^+)$. In particular,
for every $f\in L^2(\mathbb{R}^+)$ we have that in the $L^2$-sense $f=\sum_{h\in\mathscr{H}}\proin{f}{h}h$, where, as usual, for real valued $f$, $\proin{f}{h}=\int_{\mathbb{R}^+}f(x)h(x) dx$.

One of the most significant analytic properties of wavelets is its ability to characterize function spaces. For our purposes it will be useful  to have in mind the characterization of all $L^p(\mathbb{R}^+)$ spaces for $1<p<\infty$.

\begin{theorem}[Wojtaszczyk \cite{Wojtasbook}]\label{thm:characterizationLp}
For $1<p<\infty$ and some constants $C_1$ and $C_2$ we have
\begin{equation}
C_1\norm{f}_p\leq \norm{\left(\sum_{h\in\mathscr{H}}\abs{\proin{f}{h}}^2\abs{I(h)}^{-1}\chi_{I(h)}\right)^{1/2}}_p\leq C_2\norm{f}_p
\end{equation}
\end{theorem}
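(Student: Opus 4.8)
The statement is the Littlewood--Paley (square function) inequality for the Haar system on $\mathbb{R}^+$. The plan is to obtain it by the classical randomization argument coupled with Calder\'on--Zygmund theory adapted to the dyadic geometry of $(\mathbb{R}^+,\delta)$. Write $Sf$ for the square function in the statement and note that, since $h=\pm\abs{I(h)}^{-1/2}$ on $I(h)$, one has $\abs{h(x)}^2=\abs{I(h)}^{-1}\chi_{I(h)}(x)$, hence $Sf(x)=\big(\sum_{h\in\mathscr{H}}\abs{\proin{f}{h}\,h(x)}^2\big)^{1/2}$. First I would introduce, for each sign sequence $\epsilon=(\epsilon_h)_{h\in\mathscr{H}}\in\{-1,1\}^{\mathscr{H}}$, the operator $T_\epsilon f=\sum_{h\in\mathscr{H}}\epsilon_h\proin{f}{h}h$, and reduce the theorem to the estimate $\norm{T_\epsilon}_{L^p\to L^p}\le C_p$ \emph{uniformly in $\epsilon$}. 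Indeed, taking a Rademacher sequence $(r_h)$ on a probability space $\Omega$ and applying Khintchine's inequality pointwise in $x$ to the scalars $\proin{f}{h}h(x)$ gives $A_p\,Sf(x)\le\big(\int_\Omega\abs{T_{r(\omega)}f(x)}^p\,d\omega\big)^{1/p}\le B_p\,Sf(x)$; raising to the $p$-th power, integrating over $\mathbb{R}^+$ and using Fubini yields $A_p^p\norm{Sf}_p^p\le\int_\Omega\norm{T_{r(\omega)}f}_p^p\,d\omega\le B_p^p\norm{Sf}_p^p$. The upper bound of the theorem then follows from $\norm{T_{r(\omega)}f}_p\le C_p\norm{f}_p$ a.s., and the lower bound from the fact that $T_{r(\omega)}\circ T_{r(\omega)}=\mathrm{Id}$ a.s. (the signs square to one), so $\norm{f}_p\le C_p\norm{T_{r(\omega)}f}_p$, which after averaging over $\Omega$ combines with the left inequality above. (One proves all this first for $f$ in the dense class $L^p\cap L^2$, where $Sf<\infty$ a.e., then extends.)

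It remains to establish the uniform $L^p$ bound for $T_\epsilon$, and this is where the work lies. On $L^2(\mathbb{R}^+)$, $T_\epsilon$ is an isometry because $\mathscr{H}$ is orthonormal. Away from the diagonal it is the integral operator with kernel $K_\epsilon(x,y)=\sum_{h\in\mathscr{H}}\epsilon_h h(x)h(y)$: only the $h$ with $x,y\in I(h)$ contribute, there is exactly one such $h$ at each scale $\abs{I(h)}\ge\delta(x,y)$, and $\abs{h(x)h(y)}=\abs{I(h)}^{-1}$, so $\abs{K_\epsilon(x,y)}\le\sum_{2^k\ge\delta(x,y)}2^{-k}\le 2\,\delta(x,y)^{-1}$. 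The crucial point is that $K_\epsilon$ is a \emph{perfect dyadic} kernel: for fixed $y\neq y'$, any $h$ occurring in $K_\epsilon(x,y)-K_\epsilon(x,y')$ must separate $y$ from $y'$, hence (using that dyadic intervals are nested or disjoint, and that $\delta$ is an ultrametric) is supported in a dyadic interval of measure at most $\delta(y,y')$ containing $y$ or $y'$; consequently $K_\epsilon(x,y)-K_\epsilon(x,y')$ vanishes identically on $\{x:\delta(x,y)\ge 2\delta(y,y')\}$, so the H\"ormander regularity condition holds trivially. Since $(\mathbb{R}^+,\delta,dx)$ is a space of homogeneous type --- the normality of $\delta$-balls recorded in Section~\ref{sec:dyadycAnalysis} (and quantified in Lemma~\ref{lemma:deltaintegrability}) is exactly the doubling condition --- the Calder\'on--Zygmund theorem gives a weak-type $(1,1)$ bound for $T_\epsilon$ with constant depending only on the structural constants and the $L^2$-norm, hence independent of $\epsilon$. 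Marcinkiewicz interpolation with the $L^2$ bound covers $1<p\le 2$, and $2<p<\infty$ follows from $T_\epsilon^*=T_\epsilon$ by duality. Finally, since the underlying measure is Lebesgue measure, $L^p$ with respect to $\delta$ and with respect to $\abs{\cdot}$ coincide, so this is precisely the asserted bound.

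I expect the main obstacle to be conceptual rather than computational: one must frame the estimate as a Calder\'on--Zygmund bound on the space of homogeneous type $(\mathbb{R}^+,\delta)$ rather than on Euclidean $\mathbb{R}^+$; once this is done the kernel estimates are elementary precisely because $T_\epsilon$ is a perfect dyadic operator. An essentially equivalent, Khintchine-free route --- perhaps closer to the spirit of Haar analysis --- is via martingales: the partial sums $E_jf=\sum_{j(h)<j}\proin{f}{h}h$ are the conditional expectations of $f$ onto the $\sigma$-algebra generated by $\mathcal{D}^j$, $(E_jf)_{j\in\mathbb{Z}}$ is an $L^p$-bounded martingale with $E_jf\to f$ as $j\to\infty$ and $E_jf\to 0$ as $j\to-\infty$ (the latter by a density argument, using $p<\infty$), its differences $E_{j+1}f-E_jf=\sum_{j(h)=j}\proin{f}{h}h$ satisfy $\big(\sum_j\abs{E_{j+1}f-E_jf}^2\big)^{1/2}=Sf$ pointwise (exactly one Haar term is nonzero at each point and scale), and Burkholder's martingale square-function inequality delivers both inequalities of the theorem simultaneously.
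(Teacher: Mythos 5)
The paper quotes this theorem without proof, attributing it to Wojtaszczyk's book, so there is no in-paper argument to compare against; what you are really being asked to supply is the proof of a cited black box. Your proof is correct, and both routes you describe are standard and work. In the Khintchine/Calder\'on--Zygmund route, the reduction to uniform $L^p$-boundedness of the sign multipliers $T_\epsilon$, the self-duality $T_\epsilon^2=\mathrm{Id}$ which yields both inequalities at once, the size bound $\abs{K_\epsilon(x,y)}\leq 2\,\delta(x,y)^{-1}$, and the perfect-dyadic cancellation are all sound. In fact the cancellation is slightly stronger than you state: any $h$ with $h(y)\neq h(y')$ has $I(h)$ contained in the minimal dyadic interval $I^*$ with $\abs{I^*}=\delta(y,y')$, so $K_\epsilon(x,y)-K_\epsilon(x,y')$ already vanishes as soon as $\delta(x,y)>\delta(y,y')$, and the H\"ormander integral is identically zero; combined with the normality of $(\mathbb{R}^+,\delta,dx)$ recorded in Lemma~\ref{lemma:deltaintegrability} this gives a weak-$(1,1)$ bound uniform in $\epsilon$, and interpolation plus duality finish. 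The martingale alternative you sketch at the end is the more economical one and avoids the geometry of the underlying space entirely: $E_jf=\sum_{j(h)<j}\proin{f}{h}h$ is the conditional expectation of $f$ onto $\sigma(\mathcal{D}^j)$ (completeness of $\mathscr{H}$ in $L^2(\mathbb{R}^+)$ gives $\bigcap_jV_j=\{0\}$, whence $E_jf\to 0$ in $L^p$ as $j\to-\infty$ for $1<p<\infty$), so $Sf$ is precisely the square function of the dyadic martingale $(E_jf)_j$, and Burkholder's inequality delivers both sides of the theorem directly.
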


\section{Markov dyadic kernels defined in $\mathbb{R}^+$}\label{sec:Markovdyadickernels}
A real function $K$ defined in $\mathbb{R}^+\times \mathbb{R}^+$ is said to be a symmetric Markov kernel if $K$ is nonnegative, $K(x,y)=K(y,x)$ for every $x\in \mathbb{R}^+$ and $y\in \mathbb{R}^+$ and  $\int_{\mathbb{R}^+} K(x,y) dy=1$ for every $x\in \mathbb{R}^+$. We are interested in kernels $K$ as above such that $K(x,y)$ depends only on the dyadic distance $\delta(x,y)$ between the points $x$ and $y$ in $\mathbb{R}^+$. The next lemma contains three ways of writing such kernels $K$. The first is just a restatement of the dependence of $\delta$ and the other two shall be used frequently in our further analysis. The Lemma also includes relation between the coefficients and their basic properties.
\begin{lemma}\label{lemma:kerneldelta1}
Let $K$ be a real function defined on  $\mathbb{R}^+\times \mathbb{R}^+$. Assume that $K$ is nonnegative and depends only on $\delta$, i.e., $\delta(x,y)=\delta(x',y')$ implies $K(x,y)=K(x',y')$, with $\int_{\mathbb{R}^+} K(x_0,y)dy=1$ for some $x_0\in \mathbb{R}^+$. Then, with the  notation introduced in Lemma~1~(a) for the level sets of $\delta$, we have
\begin{enumerate}[(1)]
\item $K=\sum_{j\in \mathbb{Z}}k_j\chi_{L(2^j)}$, $k_j\geq 0$, $\sum_{j\in \mathbb{Z}}k_j2^{j-1}=1$ and $K$ is a symmetric Markov kernel.
\item The sequence $\overline{\alpha}=(\alpha_l=2^{-l}(k_{-l}-k_{-l+1}):l\in \mathbb{Z})$ belongs to $l^1(\mathbb{Z})$, $\sum_{l\in \mathbb{Z}}\alpha_l=1$ and the function $\varphi(s)=\sum_{l\in \mathbb{Z}}\alpha_l\varphi_l(s)$ with $\varphi_l(s)=2^{l}\chi_{(0,2^{-l}]}(s)$, provides a representation of $K$ in the sense that $\varphi(\delta(x,y))=K(x,y)$. Moreover, $\int_{\mathbb{R}^+}\abs{\varphi(s)}ds<\infty$ and $\int_{\mathbb{R}^+}\varphi(s)ds=1$.
\item The function $\varphi(s)$ can also be written as $\varphi(s)=\sum_{j\in \mathbb{Z}}\Lambda_j(\varphi_{j+1}(s)-\varphi_j(s))$.
\item\label{item:formulaerelated} The coefficients $\overline{k}=(k_j:j\in \mathbb{Z})$ in (1), $\overline{\alpha}=(\alpha_j:j\in \mathbb{Z})$ in (2) and $\overline{\Lambda}=(\Lambda_j:j\in \mathbb{Z})$ in (3) are related by the formulae
\begin{enumerate}[(\ref{item:formulaerelated}.a)]
\item $\alpha_j = \frac{k_{-j}-k_{-j+1}}{2^j}$

\item  $k_j = \sum_{i=j}^{\infty}2^{-i}\alpha_{-i}$

\item  $\Lambda_j = \sum_{l>j}\alpha_l $

\item $\alpha_j = \Lambda_{j-1}-\Lambda_j $

\item $\Lambda_j = \tfrac{1}{2}\left(-k_{-j}2^{-j}+\sum_{l<-j}k_l2^l\right) $

\item $k_j = -2^{-j}\Lambda_{-j}+\sum_{i\geq j+1}2^{-i}\Lambda_{-i}$.
\end{enumerate}

\item\label{item:propertiessequences} Some relevant properties of the sequences $\overline{k}$, $\overline{\alpha}$ and $\overline{\Lambda}$ are the following.
\begin{enumerate}[(\ref{item:propertiessequences}.a)]
\item $\overline{\alpha}\in l^1(\mathbb{Z})$;
\item $\sum_{l\leq j}\alpha_l2^l\geq 0$ for every $j\in \mathbb{Z}$;
\item $\abs{\alpha_l}\leq 2$ for every $l\in \mathbb{Z}$;
\item $\lim_{j\to-\infty}\Lambda_j=1$;
\item $\lim_{j\to+\infty}\Lambda_j=0$;
\item $\sum_{l\leq j-1}\Lambda_l2^l\geq\Lambda_j2^j$ for every $j\in \mathbb{Z}$;
\item $\sup_j\Lambda_j=1$;
\item $\inf_j\Lambda_j\geq -1$;
\item  if $\overline{k}$ is decreasing then also $\overline{\Lambda}$ is decreasing.
\end{enumerate}
\end{enumerate}
\end{lemma}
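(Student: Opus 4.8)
The plan is to prove the six formulae in part (\ref{item:formulaerelated}) and the nine properties in part (\ref{item:propertiessequences}) essentially by chasing the definitions, using the three representations (1), (2), (3) as bookkeeping devices. First I would establish (1): since $K$ depends only on $\delta$ and, by Lemma~\ref{lemma:deltaintegrability}~(a), $\delta$ takes only the values $2^j$ on the nonempty level sets $L(2^j)$, the function $K$ must be constant on each $L(2^j)$; calling that constant $k_j\geq 0$ gives $K=\sum_j k_j\chi_{L(2^j)}$. Symmetry of $K$ follows from symmetry of $\delta$, and the normalization $\int_{\mathbb{R}^+}K(x_0,y)\,dy=1$ becomes $\sum_j k_j\,|\{y:\delta(x_0,y)=2^j\}|=1$; since the slice of $L(2^j)$ through a fixed $x_0$ is one half of the dyadic interval of length $2^j$ containing $x_0$, its measure is $2^{j-1}$, giving $\sum_j k_j 2^{j-1}=1$. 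Because this holds for one $x_0$ and the slice measures are independent of $x_0$, it holds for all $x$, so $K$ is a symmetric Markov kernel.

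Next I would build the $\varphi$ representation. The key observation is that $\chi_{L(2^j)}(x,y)=\chi_{\{\delta(x,y)=2^j\}}$, so writing $K(x,y)=\varphi(\delta(x,y))$ forces $\varphi(2^j)=k_j$ and $\varphi=0$ off the powers of two; the stated closed form $\varphi(s)=\sum_l \alpha_l\varphi_l(s)$ with $\varphi_l=2^l\chi_{(0,2^{-l}]}$ and $\alpha_l=2^{-l}(k_{-l}-k_{-l+1})$ is then verified by evaluating the telescoping sum at $s=2^j$: only the terms with $2^{-l}\geq 2^j$, i.e. $l\leq -j$, contribute, and $\sum_{l\leq -j}2^l\cdot 2^{-l}(k_{-l}-k_{-l+1})=\sum_{m\geq j}(k_m-k_{m+1})=k_j$ by telescoping (using $k_m\to 0$, which follows from summability of $\sum k_j 2^{j-1}$ together with monotone-tail control, a point to be careful about). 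This simultaneously gives formula (\ref{item:formulaerelated}.a) and, read backwards, formula (\ref{item:formulaerelated}.b). Integrating $\varphi_l$ gives $\int\varphi_l=1$, so $\int\varphi=\sum_l\alpha_l$; and this must equal $1$ because $\int_{\mathbb{R}^+}K(x,y)\,dy=\int_{\mathbb{R}^+}\varphi(\delta(x,y))\,dy$, which by Lemma~\ref{lemma:deltaintegrability}~(a) equals $\sum_j k_j 2^{j-1}=1$. Absolute integrability $\int|\varphi|<\infty$ reduces to $\overline{\alpha}\in l^1$, which I address below, and that is property (\ref{item:propertiessequences}.a).

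The $\Lambda$ representation (3) and formulae (\ref{item:formulaerelated}.c)--(\ref{item:formulaerelated}.d) come from an Abel summation: writing $\varphi=\sum_l\alpha_l\varphi_l$ and regrouping $\sum_l\alpha_l\varphi_l=\sum_j\Lambda_j(\varphi_{j+1}-\varphi_j)$ with $\Lambda_j=\sum_{l>j}\alpha_l$ is exactly summation by parts; then $\alpha_j=\Lambda_{j-1}-\Lambda_j$ is immediate, and (\ref{item:formulaerelated}.e)--(\ref{item:formulaerelated}.f) follow by substituting (\ref{item:formulaerelated}.b) into $\Lambda_j=\sum_{l>j}\alpha_l$ and conversely, again via telescoping. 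For part (\ref{item:propertiessequences}): (\ref{item:propertiessequences}.c) $|\alpha_l|\leq 2$ follows from $\alpha_l=2^{-l}(k_{-l}-k_{-l+1})$, the nonnegativity $k_j\geq 0$, and the normalization bound $k_j 2^{j-1}\leq 1$, i.e. $k_j\leq 2^{1-j}$, so $|\alpha_l|\leq 2^{-l}\max(k_{-l},k_{-l+1})\leq 2^{-l}\cdot 2^{1-(-l)}\cdot$const — I would extract the sharp constant $2$ from $k_{-l}\le 2^{1+l}$. Property (\ref{item:propertiessequences}.b), $\sum_{l\leq j}\alpha_l 2^l\geq 0$: using (\ref{item:formulaerelated}.a), $\sum_{l\leq j}\alpha_l 2^l=\sum_{l\leq j}(k_{-l}-k_{-l+1})$ telescopes to $k_{-j}\cdot(\text{boundary})\ge 0$ since the remaining term is a nonnegative $k$. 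Property (\ref{item:propertiessequences}.f) is the same inequality transcribed through (\ref{item:formulaerelated}.d) and an Abel rearrangement. Properties (\ref{item:propertiessequences}.d),(\ref{item:propertiessequences}.e): $\Lambda_j=\sum_{l>j}\alpha_l$ is a tail of a convergent series so $\Lambda_j\to 0$ as $j\to+\infty$, and $\Lambda_j\to\sum_{l\in\mathbb{Z}}\alpha_l=1$ as $j\to-\infty$. Properties (\ref{item:propertiessequences}.g),(\ref{item:propertiessequences}.h) ($\sup\Lambda_j=1$, $\inf\Lambda_j\geq -1$) follow from (\ref{item:formulaerelated}.e), $\Lambda_j=\tfrac12(-k_{-j}2^{-j}+\sum_{l<-j}k_l2^l)$, bounding the two pieces against $\sum k_l 2^{l-1}=1$: the positive part is at most $1$ and, combined with the endpoint behavior from (\ref{item:propertiessequences}.d), the sup is exactly $1$; the negative part is at most $k_{-j}2^{-j-1}\le 1$ in absolute value so $\Lambda_j\geq -1$. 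Finally (\ref{item:propertiessequences}.i): if $\overline{k}$ is decreasing, then every $\alpha_l=2^{-l}(k_{-l}-k_{-l+1})\geq 0$ (note $k_{-l}$ has the larger index $-l+1$... careful: $k_{-l}-k_{-l+1}\ge 0$ iff index $-l\le -l+1$, true), hence $\Lambda_{j-1}-\Lambda_j=\alpha_j\ge 0$, so $\overline{\Lambda}$ is decreasing.

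The main obstacle I anticipate is not any single hard estimate but the need to justify all the telescoping/Abel manipulations rigorously given only $\overline{\alpha}\in l^1$ and $k_j\geq 0$ with $\sum k_j 2^{j-1}=1$: one must rule out pathological tails of $\overline{k}$ (for instance, establish $k_j\to 0$ both as $j\to\pm\infty$ and that $\sum_{l<-j}k_l 2^l$ converges, from $\sum k_j 2^{j-1}=1$ and positivity), so that the formal series identities between $\overline{k}$, $\overline{\alpha}$, $\overline{\Lambda}$ genuinely converge and can be interchanged. Once these convergence bookkeeping points are nailed down, each of the fifteen assertions is a short computation of the kind sketched above, and I would present them in the order (1) $\Rightarrow$ (2) $\Rightarrow$ (3) $\Rightarrow$ (\ref{item:formulaerelated}) $\Rightarrow$ (\ref{item:propertiessequences}), deriving the properties in (\ref{item:propertiessequences}) from whichever of the three equivalent coordinate systems makes them most transparent.
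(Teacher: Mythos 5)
Your outline is correct and follows the paper's route for parts (1)--(4) and most of (5): the level-set decomposition for (1), the telescoping verification of $\varphi(\delta(x,y))=K(x,y)$ with the $k_j\to 0$ caveat (which indeed follows from $k_j 2^{j-1}\le 1$), the Abel summation for (3), and the definitional chase for (4). Where you genuinely diverge is in (5.c), (5.g), and (5.h). For (5.g) and (5.h) the paper's proof is noticeably heavier: it first establishes $\overline{\Lambda}\in\ell^\infty$ from (5.d)--(5.e), then argues by contradiction that $\sup_j\Lambda_j>1$ would force a maximizer $j_5$ violating (5.f), and proves $\inf_j\Lambda_j\ge -1$ by an inductive estimate showing that if some $\Lambda_{j_0}<-1$ then $\Lambda_{j_0+m}\le\tfrac12(\Lambda_{j_0}+1)<0$ for all $m$, contradicting $\Lambda_j\to 0$. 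Your argument instead reads both bounds directly off formula (4.e), $\Lambda_j=\tfrac12\bigl(-k_{-j}2^{-j}+\sum_{l<-j}k_l 2^l\bigr)$: dropping the nonnegative subtracted term gives $\Lambda_j\le\tfrac12\sum_l k_l2^l=1$, and dropping the nonnegative tail sum gives $\Lambda_j\ge -k_{-j}2^{-j-1}\ge -1$ since $k_{-j}2^{-j-1}$ is one term of a nonnegative series summing to $1$. This is a cleaner and more elementary route that avoids the contradiction/induction machinery entirely; it is worth noting as a simplification. Similarly, you prove (5.c) directly from $k_j\le 2^{1-j}$, whereas the paper derives it from (4.d) and the just-established bounds (5.g)--(5.h); both are fine, though yours decouples (5.c) from (5.g)--(5.h), which tidies the logical dependency graph. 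All the convergence bookkeeping you flag (telescoping valid because $k_j2^{j-1}\to 0$ so $k_j\to 0$ as $j\to+\infty$, $\overline{\alpha}\in\ell^1$ from $\sum k_j2^j=2$) is handled the same way in the paper, so there is no gap.
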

\begin{proof}[Proof of (1)]
Since $K$ depends only on $\delta$, then the level sets for $\delta$ are level sets for $K$. Hence $K$ is constant, say $k_j\geq 0$, in $L(2^j)$ for each $j\in \mathbb{Z}$. Notice that the section of $L(2^j)$ at any $x\in \mathbb{R}^+$ has measure $2^{j-1}$, no matter what is $x$. In fact, $\left. L(2^j)\right|_{x}=\{y\in \mathbb{R}^+:(x,y)\in L(2^j)\}=\{y\in \mathbb{R}^+:\delta(x,y)=2^j\}=I$, where $I\in\mathcal{D}$ is the brother of the dyadic interval $J$ of level $j-1$ such that $x\in J$. Hence $\abs{\left. L(2^j)\right|_{x}}=2^{j-1}$. With the above considerations, since $\int_{\mathbb{R}^+}K(x_0,y)dy=1$, we see that
\begin{align*}
1&=\int_{\mathbb{R}^+}K(x_0,y)dy=\sum_{j\in \mathbb{Z}}k_j\int_{\mathbb{R}^+}\chi_{L(2^j)}(x_0,y)dy\\
&=\sum_{j\in \mathbb{Z}}k_j\abs{\left. L(2^j)\right|_{x_0}}=\sum_{j\in \mathbb{Z}}k_j 2^{j-1}\\
&=\sum_{j\in \mathbb{Z}}k_j\abs{\left. L(2^j)\right|_{x}}=\int_{\mathbb{R}^+}K(x,y)dy.
\end{align*}
Then $K$ is a Markov kernel and that the series $\sum_{j\in \mathbb{Z}}k_j2^{j-1}$ converges to $1$. The symmetry of $K$ is clear.

\textit{Proof of (2).} Since $\abs{\alpha_l}\leq 2^{-l}k_{-l}+2^{-l}k_{-l+1}$, the fact that $\overline{\alpha}$ belongs to $l^1(\mathbb{Z})$ follow from the fact that $\sum_{j\in \mathbb{Z}}k_j2^j=2$ proved (1). On the other hand,
\begin{equation*}
\sum_{l\in \mathbb{Z}}\alpha_l=\sum_{l\in \mathbb{Z}}k_{-l}2^{-l}-\sum_{l\in \mathbb{Z}}k_{-l+1}2^{-l}=2-1=1.
\end{equation*}
Let us now check that $\varphi(\delta(x,y))=K(x,y)$. Since $\delta(x,y)$ is a integer power of two and $k_j\to 0$ as $j\to\infty$, we have
\begin{align*}
\varphi(\delta(x,y)) &=\sum_{l\in \mathbb{Z}}\alpha_l\varphi_l(\delta(x,y))\\
&= \sum_{l\in \mathbb{Z}}\alpha_l 2^l\chi_{(0,2^{-l}]}(\delta(x,y))\\
&=\sum_{l\leq\log_2\tfrac{1}{\delta(x,y)}}2^{-l}(k_{-l}-k_{-l+1})2^l\\
&= \sum_{j\geq\log_2\delta(x,y)}(k_j-k_{j+1})\\
&= k_{\log_2\delta(x,y)}=K(x,y).
\end{align*}
Now, the absolute integrability of $\varphi$ and the value of its integral follow from the formulae $\varphi(s)=\sum_{l\in \mathbb{Z}}\alpha_l\varphi_l(s)$ since $\overline{\alpha}\in l^1(\mathbb{Z})$, $\sum_{l\in \mathbb{Z}}\alpha_l=1$ and $\int_{\mathbb{R}^+}\varphi_l(s)ds=1$.

\smallskip
\textit{Proof of (3).} Fix a positive $s$ and proceed to sum by parts the series defining $\varphi(s)=\sum_{l\in \mathbb{Z}}\alpha_l\varphi_l(s)$. Set $\Lambda_j=\sum_{l>j}\alpha_l$. Since $\alpha_l=\Lambda_{l-1}-\Lambda_l$, we have that
\begin{equation*}
\varphi(s) = \sum_{l\in \mathbb{Z}}(\Lambda_{l-1}-\Lambda_l)\varphi_l(s)
=\sum_{l\in \mathbb{Z}}\Lambda_{l-1}\varphi_l(s)-\sum_{l\in \mathbb{Z}}\Lambda_{l}\varphi_l(s)
= \sum_{l\in \mathbb{Z}}\Lambda_{l}(\varphi_{l+1}(s)-\varphi_l(s)),
\end{equation*}
as desired. Notice, by the way, that $\varphi_{l+1}(s)-\varphi_l(s)$ can be written in terms of Haar functions as $\varphi_{l+1}(s)-\varphi_l(s)=2^{\tfrac{l}{2}}h^l_0(s)$.

\smallskip
\textit{Proof of (4).} Follows from the definitions of $\overline{\alpha}$ and $\overline{\Lambda}$.

\smallskip
\textit{Proof of (5).} Notice first that (5.a) was proved in (2). The nonnegativity of $K$ and (4.b) show (5.b). Property (5.d) and (5.e) of the sequence $\overline{\Lambda}$ follow from (4.c) and the fact that $\sum_{l\in \mathbb{Z}}\alpha_l=1$ proved in (2). Inequality (5.f) follows from the positivity of $K$ and (4.f).

We will prove (5.g). From  (5.d) and (5.e) we have that $\overline{\Lambda}\in l^{\infty}(\mathbb{Z})$. In fact, there exist $j_1<j_2$ in $\mathbb{Z}$ such that $\Lambda_j<2$ for $j<j_1$ and $\Lambda_j>-1$ for $j>j_2$. Since the set $\{\Lambda_{j_1},\Lambda_{j_1+1},\ldots,\Lambda_{j_2}\}$ is finite, we get the boundedness of $\overline{\Lambda}$. On the other hand, since from (5.d) $\lim_{j\to-\infty}\Lambda_j=1$ we have that $\sup_j\Lambda_j\geq 1$. Assume that $\sup_j\Lambda_j> 1$. Then there exists $j_0\in \mathbb{Z}$ such that $\Lambda_{j_0}>1$. Hence, again from (5.d) and (5.e) we must have that for $j<j_3$, $\Lambda_j<\Lambda_{j_0}$ and for $j>j_4$, $\Lambda_j<1<\Lambda_{j_0}$ for some integers $j_3<j_4$. So that there exists $j_5\in \mathbb{Z}$ such that $\Lambda_{j_5}\geq\Lambda_j$ for every $j\in \mathbb{Z}$ and $\Lambda_{j_5}>1$. Now
\begin{equation*}
2^{j_5}\Lambda_{j_5}=\sum_{l\leq j_5 -1}\Lambda_{j_5}2^l>\sum_{l\leq j_5-1}\Lambda_l2^l
\end{equation*}
which contradicts (5.f) with $j=j_5$.

For prove (5.h) assume that $\inf_j\Lambda_j<-1$. Choose $j_0\in \mathbb{Z}$ such that $\Lambda_{j_0}<-1$. Then from (5.f)
\begin{equation*}
\Lambda_{j_0+1}\leq 2^{-(j_0+1)}\sum_{l\leq j_0}\Lambda_l2^l
=\sum_{l\leq j_0}\Lambda_l2^{l-(j_0+1)}
=\frac{1}{2}\left(\Lambda_{j_0}+\sum_{l< j_0}\Lambda_l2^{l-j_0)}\right)
\leq\frac{1}{2}(\Lambda_{j_0}+1).
\end{equation*}
In the last inequality we used (5.g). Let us prove, inductively, that $\Lambda_{j_0+m}\leq\tfrac{1}{2}(\Lambda_{j_0}+1)$ for every $m\in \mathbb{N}$. Assume that the above inequality holds for $1\leq m\leq m_0$ and let us prove it for  $m_0+1$.
\begin{align*}
\Lambda_{j_0+(m_0+1)}&\leq \sum_{l<j_0+m_0+1}2^{l-(j_0+m_0+1)}\Lambda_l\\
&=2^{-m_0-1}\left(\sum_{l=j_0}^{j_0+m_0}2^{l-j_0}\Lambda_l+\sum_{l<j_0}2^{l-j_0}\Lambda_l\right)\\
&=2^{-m_0-1}\left(\sum_{l=1}^{m_0}2^{l}\Lambda_{j_0+l}+\Lambda_{j_0}+\sum_{l<j_0}2^{l-j_0}\Lambda_l\right)\\
&\leq 2^{-m_0-1}\left(\sum_{l=1}^{m_0}2^{l-1}(\Lambda_{j_0}+1)+\Lambda_{j_0}+\sum_{l<j_0}2^{l-j_0}\right)\\
&=2^{-m_0-1}((2^{m_0}-1)(\Lambda_{j_0}+1)+\Lambda_{j_0}+1)\\
&=\frac{1}{2}(\Lambda_{j_0}+1).
\end{align*}

Property (5.c) for the sequence $\overline{\alpha}$ follows from (4.d), (5.g) and (5.h). Item (5.i) follows from (4.a) and (4.d).
\end{proof}

In the sequel we shall write $\mathscr{K}$ to denote the set of all nonnegative kernels defined on $\mathbb{R}^+\times \mathbb{R}^+$ that depends only on $\delta$ and for some $x_0\in \mathbb{R}^+$, $\int_{\mathbb{R}^+}K(x_0,y)dy=1$.

Let us finish this section by proving a lemma that shall be used later.

\begin{lemma}\label{lemma:basiccharacterizationK}
Let $\overline{\Lambda}=(\Lambda_j:j\in \mathbb{Z})$ be a decreasing sequence of real numbers satisfying
(5.d) and (5.e). Then there exists a unique $K\in\mathscr{K}$ such that the sequence that (3) of  Lemma~\ref{lemma:kerneldelta1} associates to $K$ is the given $\overline{\Lambda}$.
\end{lemma}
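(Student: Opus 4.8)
The plan is to build $K$ explicitly by running backwards through the chain of identities in item~(\ref{item:formulaerelated}) of Lemma~\ref{lemma:kerneldelta1}, to verify that the kernel so obtained lies in $\mathscr{K}$, and finally to read off uniqueness from the fact that each of those identities is invertible. For existence I would first set $\alpha_j:=\Lambda_{j-1}-\Lambda_j$, as dictated by (4.d). Since $\overline{\Lambda}$ is decreasing this gives $\alpha_j\geq0$ for every $j$, and since a decreasing sequence with (5.d) and (5.e) necessarily satisfies $0\leq\Lambda_j\leq1$ for all $j$, the sequence $\overline{\alpha}$ is bounded; moreover the telescoping series $\sum_{l\in\mathbb{Z}}\alpha_l$ equals $\lim_{j\to-\infty}\Lambda_j-\lim_{j\to+\infty}\Lambda_j=1$, so $\overline{\alpha}$ is a nonnegative element of $l^1(\mathbb{Z})$ with $\sum_l\alpha_l=1$. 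Next, following (4.b), I would put $k_j:=\sum_{i\geq j}2^{-i}\alpha_{-i}$; this series converges absolutely because $\overline{\alpha}\in l^{\infty}(\mathbb{Z})$ and $\sum_{i\geq j}2^{-i}<\infty$, and each $k_j$ is nonnegative. Finally define $K:=\sum_{j\in\mathbb{Z}}k_j\chi_{L(2^j)}$.

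By construction $K\geq0$ and $K$ depends only on $\delta$ (its value at a pair $(x,y)$ with $\delta(x,y)=2^j$ is $k_j$), so by part~(1) of Lemma~\ref{lemma:kerneldelta1} it suffices, in order to conclude $K\in\mathscr{K}$, to check that $\sum_{j}k_j2^{j-1}=1$. Since all terms are nonnegative, Tonelli allows the interchange
\[
\sum_{j\in\mathbb{Z}}k_j2^{j}=\sum_{j\in\mathbb{Z}}2^{j}\sum_{i\geq j}2^{-i}\alpha_{-i}=\sum_{i\in\mathbb{Z}}2^{-i}\alpha_{-i}\sum_{j\leq i}2^{j}=\sum_{i\in\mathbb{Z}}2\,\alpha_{-i}=2,
\]
whence $\sum_{j}k_j2^{j-1}=1$ and $K\in\mathscr{K}$.

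It remains to check that the sequence which part~(3) of Lemma~\ref{lemma:kerneldelta1} assigns to this $K$ is the prescribed $\overline{\Lambda}$. Tracing that construction, one first forms via (4.a) the numbers $2^{-l}(k_{-l}-k_{-l+1})$; but the definition of $k_j$ gives $k_{-l}-k_{-l+1}=2^{l}\alpha_l$, so this step returns precisely the sequence $\overline{\alpha}$, and then (4.c) produces $\widetilde{\Lambda}_j=\sum_{l>j}\alpha_l=\sum_{l>j}(\Lambda_{l-1}-\Lambda_l)$, which telescopes to $\Lambda_j-\lim_{N\to\infty}\Lambda_N=\Lambda_j$ by (5.e). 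Hence $\widetilde{\Lambda}=\overline{\Lambda}$, establishing existence. For uniqueness, suppose $K_1,K_2\in\mathscr{K}$ are both associated, in the sense of part~(3), with the given $\overline{\Lambda}$. By (4.f) the coefficient sequence $\overline{k}$ of any element of $\mathscr{K}$ is recovered from its $\overline{\Lambda}$ through $k_j=-2^{-j}\Lambda_{-j}+\sum_{i\geq j+1}2^{-i}\Lambda_{-i}$; therefore $K_1$ and $K_2$ have the same coefficients $k_j$, and since every element of $\mathscr{K}$ equals $\sum_{j}k_j\chi_{L(2^j)}$ by part~(1) of Lemma~\ref{lemma:kerneldelta1}, we conclude $K_1=K_2$.

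I do not expect a genuine obstacle here: the two points requiring a little care are the absolute convergence of the series defining $k_j$ and the interchange of the two nonnegative sums, both controlled by the boundedness of $\overline{\alpha}$ — itself an immediate consequence of $\overline{\Lambda}$ being decreasing with limits $1$ at $-\infty$ and $0$ at $+\infty$ — together with the bookkeeping of correctly following ``the sequence associated to $K$'' through the chain $\overline{k}\to\overline{\alpha}\to\overline{\Lambda}$, which is exactly what the explicit formulae of item~(\ref{item:formulaerelated}) make transparent.
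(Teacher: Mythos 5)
Your construction is the same as the paper's — both define the kernel through the coefficients $\alpha_j=\Lambda_{j-1}-\Lambda_j$ and use nonnegativity plus the telescoping identity $\sum_j\alpha_j=1$ to get $K\in\mathscr{K}$ — though you route through $\overline{k}$ and $\chi_{L(2^j)}$ while the paper writes $K$ directly via $\sum_j\alpha_j\varphi_j(\delta(x,y))$. Your write-up is more thorough: you explicitly verify that the associated sequence is the given $\overline{\Lambda}$ and you spell out uniqueness via (4.f), both of which the paper leaves implicit.
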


\begin{proof}
Define $K(x,y)=\sum_{j\in \mathbb{Z}}(\Lambda_{j-1}-\Lambda_j)\varphi_j(\delta(x,y))$. Since $\overline{\Lambda}$ is decreasing the coefficients in the above series are all nonnegative. On the other hand, from (5.d) and (5.e) we have that $\sum_{j\in \mathbb{Z}}(\Lambda_{j-1}-\Lambda_j)=1$. Hence, for every $x\in \mathbb{R}^+$ we have
\begin{equation*}
\int_{y\in \mathbb{R}^+}K(x,y)dy = \sum_{j\in \mathbb{Z}}(\Lambda_{j-1}-\Lambda_j)\int_{y\in \mathbb{R}^+}\varphi_j(\delta(x,y))dy
= \sum_{j\in \mathbb{Z}}(\Lambda_{j-1}-\Lambda_j)=1
\end{equation*}
So that $K\in\mathscr{K}$.
\end{proof}

\section{The spectral analysis of the operators induced by kernels in $\mathscr{K}$}\label{sec:spectralanalysis}
For $K\in\mathscr{K}$ and $f$ continuous with bounded support in $\mathbb{R}^+$ the integral $\int_{\mathbb{R}^+}K(x,y)f(y)dy$ is well defined and finite for each $x\in \mathbb{R}^+$. Actually each $K\in\mathscr{K}$ determines an operator which is well defined and bounded on each $L^p(\mathbb{R}^+)$ for $1\leq p\leq\infty$.

\begin{lemma}
Let $K\in\mathscr{K}$ be given. Then for $f\in L^p(\mathbb{R}^+)$ the integral $\int_{\mathbb{R}^+}K(x,y)f(y)dy$ is absolutely convergent for almost every $x\in \mathbb{R}^+$. Moreover,
\begin{equation*}
Tf(x)=\int_{\mathbb{R}^+}K(x,y)f(y) dy
\end{equation*}
defines a bounded (non-expansive) operator on each $L^p(\mathbb{R}^+)$, $1\leq p\leq\infty$. Precisely, $\norm{Tf}_p\leq\norm{f}_p$ for $f\in L^p(\mathbb{R}^+)$.
\end{lemma}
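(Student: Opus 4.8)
The plan is to prove the $L^p$-boundedness by viewing $T$ as an averaging operator built from the elementary kernels and exploiting the integral estimates of Lemma~\ref{lemma:deltaintegrability}. First I would use the representation of Lemma~\ref{lemma:kerneldelta1}(2), writing $K(x,y)=\varphi(\delta(x,y))=\sum_{l\in\mathbb{Z}}\alpha_l\varphi_l(\delta(x,y))$ with $\varphi_l(s)=2^l\chi_{(0,2^{-l}]}(s)$ and $\overline{\alpha}\in l^1(\mathbb{Z})$, $\sum_l\alpha_l=1$. The key observation is that each operator $T_lf(x)=\int_{\mathbb{R}^+}\varphi_l(\delta(x,y))f(y)\,dy$ is an honest average: since $\varphi_l(\delta(x,y))=2^l\chi_{\{y:\delta(x,y)\le 2^{-l}\}}(y)=2^l\chi_{B_\delta(x,2^{-l})}(y)$, and $B_\delta(x,2^{-l})$ is the dyadic interval containing $x$ of length exactly $2^{-l}$, we get $T_lf(x)=\frac{1}{|B_\delta(x,2^{-l})|}\int_{B_\delta(x,2^{-l})}f(y)\,dy$, i.e.\ $T_l$ is the dyadic conditional expectation $E_l$ onto the $\sigma$-algebra generated by $\mathcal{D}^{-l}$. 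Hence $\|T_lf\|_p\le\|f\|_p$ for every $1\le p\le\infty$ by Jensen's inequality (or the trivial $L^1$ and $L^\infty$ bounds plus interpolation).

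Next I would assemble the full operator. Since $\overline{\alpha}\in l^1(\mathbb{Z})$, the series $\sum_l\alpha_l T_lf$ converges absolutely in $L^p$, with $\|\sum_l\alpha_l T_lf\|_p\le\sum_l|\alpha_l|\,\|f\|_p=\|\overline{\alpha}\|_{l^1}\|f\|_p$; this already gives boundedness, but only with constant $\|\overline{\alpha}\|_{l^1}$ rather than the sharp non-expansive constant $1$. To recover $\|Tf\|_p\le\|f\|_p$ I would instead pass through the $\overline{\Lambda}$-representation of Lemma~\ref{lemma:kerneldelta1}(3)--(4): from $K\in\mathscr{K}$ and nonnegativity one has $\Lambda_j\ge ?$— more precisely, write $T$ directly in terms of the conditional expectations. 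Using $\varphi(s)=\sum_j\Lambda_j(\varphi_{j+1}(s)-\varphi_j(s))$ and summing by parts back the other way, or simply using that $K\ge 0$ with $\int K(x,y)\,dy=1$, the operator $T$ has a nonnegative kernel with rows integrating to $1$; therefore $|Tf(x)|\le\int K(x,y)|f(y)|\,dy$ pointwise, which immediately gives $\|Tf\|_\infty\le\|f\|_\infty$, and by Tonelli together with the symmetry $\int K(x,y)\,dx=1$ gives $\|Tf\|_1\le\|f\|_1$. The Riesz--Thorin interpolation theorem then yields $\|Tf\|_p\le\|f\|_p$ for all $1\le p\le\infty$. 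Absolute convergence of $\int K(x,y)f(y)\,dy$ for a.e.\ $x$ when $f\in L^p$ follows from the $L^1$-bound applied to $|f|$ (for $p=1$), from the $L^\infty$ bound (for $p=\infty$), and from Hölder against the $L^{p'}$ function $y\mapsto K(x,y)^{1/p'}$ combined with $\int K(x,y)\,dy=1$ in the intermediate range — equivalently, one notes $T|f|<\infty$ a.e.\ since $T|f|\in L^p$.

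The only genuine subtlety — the main obstacle — is making rigorous the identification $T_l=E_l$ and the interchange of the infinite sum $\sum_l\alpha_l$ with the integral defining $Tf(x)$: one must check that $\sum_l\alpha_l\varphi_l(\delta(x,y))$ converges to $K(x,y)$ not merely pointwise (which is Lemma~\ref{lemma:kerneldelta1}(2)) but in a way compatible with integrating against $f\in L^p$. This is handled by dominated convergence: for fixed $x$, the partial sums are dominated by $\sum_l|\alpha_l|\varphi_l(\delta(x,y))$, whose integral in $y$ is $\|\overline{\alpha}\|_{l^1}<\infty$, so the partial sums converge to $K(x,\cdot)$ in $L^1_y$ when $f\in L^\infty$, and more generally one reduces to nonnegative $f$ and invokes Tonelli to move the sum outside freely. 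Once this bookkeeping is in place, everything else is the soft functional-analytic argument sketched above, and no delicate estimate from Lemma~\ref{lemma:deltaintegrability} is actually needed for this particular statement — it is the nonnegativity plus the two normalizations $\int K(x,y)\,dy=\int K(x,y)\,dx=1$ that do all the work, via interpolation between the endpoints.
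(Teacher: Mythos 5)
Your proof is correct, and the endpoints ($p=1$ by Tonelli plus symmetry, $p=\infty$ by the Markov normalization) are handled exactly as in the paper. Where you diverge is the intermediate range $1<p<\infty$: you invoke Riesz--Thorin interpolation between the two endpoint bounds, whereas the paper gives a direct, self-contained Schur-type argument by factoring $K=K^{1/p'}K^{1/p}$ and applying H\"older, so that $\abs{Tf(x)}^p\le\int K(x,y)\abs{f(y)}^p\,dy$ pointwise and then Tonelli finishes. Both are standard and yield the sharp constant $1$; the paper's version is more elementary (no interpolation theorem needed), while yours is shorter once Riesz--Thorin is granted. Your opening detour through the decomposition $K=\sum_l\alpha_l\varphi_l(\delta(\cdot,\cdot))$ and the identification of $T_l$ with the dyadic conditional expectation $E_{-l}$ is a genuinely nice structural observation, and you correctly diagnose why it only gives the constant $\norm{\overline{\alpha}}_{\ell^1}\ge 1$ (the $\alpha_l$ need not all be nonnegative) and correctly abandon it; the paper does not use it here. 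Your final remark is also right: none of the quantitative estimates from Lemma~\ref{lemma:deltaintegrability} are needed --- only nonnegativity, symmetry, and $\int K(x,\cdot)=1$ do the work, which is precisely the content of the paper's proof.
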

\begin{proof}
Notice first that the function $K(x,y)f(y)=\varphi(\delta(x,y))f(y)$ is measurable as a function defined on $\mathbb{R}^+ \times\mathbb{R}^+$, for every measurable $f$ defined on $\mathbb{R}^+$. The case $p=\infty$ follows directly from the facts that $K$ is a Markov kernel and that $K(x,y)\abs{f(y)}\leq K(x,y)\norm{f}_\infty$. For $p=1$ using Tonelli's theorem we get
\begin{equation*}
\int_{x\in \mathbb{R}^+}\left(\int_{y\in \mathbb{R}^+}K(x,y)\abs{f(y)}dy\right)dx=
\int_{y\in \mathbb{R}^+}\abs{f(y)}\left(\int_{x\in \mathbb{R}^+}K(x,y)dx\right)dy=\norm{f}_1.
\end{equation*}
Hence $\int_{\mathbb{R}^+}K(x,y)f(y) dy$ is absolutely convergent for almost every $x$ and $\norm{Tf}_1\leq\norm{f}_1$. Assume that $1<p<\infty$ and take $f\in L^p(\mathbb{R}^+)$. Then
\begin{align*}
\abs{Tf(x)}^p &\leq \left(\int_{\mathbb{R}^+}K(x,y)\abs{f(y)} dy\right)^p=\left(\int_{\mathbb{R}^+}K(x,y)^{\tfrac{1}{p'}}K(x,y)^{\tfrac{1}{p}}\abs{f(y)} dy\right)^p\\
&\leq \left(\int_{\mathbb{R}^+}K(x,y) dy\right)^{\tfrac{p}{p'}}\left(\int_{\mathbb{R}^+}K(x,y)\abs{f(y)}^p dy\right)\\
&=\int_{\mathbb{R}^+}K(x,y)\abs{f(y)}^p dy.
\end{align*}
Hence $\norm{Tf}^p_p=\int_{\mathbb{R}^+}\abs{Tf(x)}^p dx\leq \int_{y\in\mathbb{R}^+}\left(\int_{x\in\mathbb{R}^+}K(x,y) dx\right)\abs{f(y)}^p dy=\norm{f}^p_p$.
\end{proof}

The spectral analysis of the operators $T$ defined by kernels in $\mathscr{K}$ is given in the next result.

\begin{theorem}\label{thm:autovalores}
Let $K\in\mathscr{K}$ and let $T$ be the operator in $L^2(\mathbb{R}^+)$ defined by $Tf(x)=\int_{\mathbb{R}^+}K(x,y)f(y) dy$. Then the Haar functions are eigenfunctions for $T$ and the eigenvalues are given by the sequence $\overline{\Lambda}$ introduced in Lemma~\ref{lemma:kerneldelta1}. Precisely, for each $h\in\mathscr{H}$
\begin{equation*}
Th=\Lambda_{j(h)}h:=\lambda(h) h,
\end{equation*}
where $j(h)$ is the level of the support of $h$, i.e. $supp\, h\in\mathcal{D}^{j(h)}$.
\end{theorem}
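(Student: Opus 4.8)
The plan is to use the representation of $K$ furnished by Lemma~\ref{lemma:kerneldelta1}, written in the form $K(x,y)=\varphi(\delta(x,y))$ with
\begin{equation*}
\varphi(s)=\sum_{j\in\mathbb{Z}}(\Lambda_{j-1}-\Lambda_j)\,\varphi_j(s),\qquad \varphi_j(s)=2^j\chi_{(0,2^{-j}]}(s),
\end{equation*}
where the coefficients $\alpha_j=\Lambda_{j-1}-\Lambda_j$ form an absolutely summable sequence by (4.d) and (5.a). The observation that drives everything is a dyadic averaging identity: for each fixed $x\in\mathbb{R}^+$ and each $j\in\mathbb{Z}$, as functions of $y$ and up to the null set $\{y=x\}$ on which $\varphi_j$ vanishes,
\begin{equation*}
\varphi_j(\delta(x,y))=2^j\,\chi_{I_j(x)}(y),
\end{equation*}
where $I_j(x)$ is the unique interval of $\mathcal{D}^j$ containing $x$, so that $\abs{I_j(x)}=2^{-j}$. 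This follows at once from the ultrametric structure of $\delta$: $\delta(x,y)\leq 2^{-j}$ exactly when $x$ and $y$ lie in a common dyadic interval of Lebesgue measure $\leq 2^{-j}$, and since the dyadic intervals containing $x$ form an increasing chain, this happens if and only if $y\in I_j(x)$. Integrating against $f$,
\begin{equation*}
\int_{\mathbb{R}^+}\varphi_j(\delta(x,y))f(y)\,dy=\frac{1}{\abs{I_j(x)}}\int_{I_j(x)}f(y)\,dy=:E_jf(x)
\end{equation*}
is the dyadic average of $f$ at resolution level $j$.

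Next I would check that $Tf(x)=\sum_{j\in\mathbb{Z}}\alpha_j\,E_jf(x)$ for every bounded $f$ with bounded support, in particular for $f=h\in\mathscr{H}$. Since $\abs{I_j(x)}=2^{-j}$ we have
\begin{equation*}
\sum_{j\in\mathbb{Z}}\abs{\alpha_j}\int_{\mathbb{R}^+}\varphi_j(\delta(x,y))\abs{f(y)}\,dy\leq\norm{f}_\infty\sum_{j\in\mathbb{Z}}\abs{\alpha_j}<\infty,
\end{equation*}
so Fubini's theorem for the product of counting measure on $\mathbb{Z}$ with Lebesgue measure on $\mathbb{R}^+$ legitimizes interchanging the series defining $K$ with the integral against $f$. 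Then I compute $E_jh$ for a fixed $h$ with $I(h)=I\in\mathcal{D}^{j_0}$, $j(h)=j_0$. Because any two dyadic intervals are nested or disjoint: for $j\leq j_0$ the interval $I_j(x)$ either contains $I$ or is disjoint from it, and in both cases $\int_{I_j(x)}h=0$ (using $\int_I h=0$), hence $E_jh\equiv 0$; for $j\geq j_0+1$ the interval $I_j(x)$ sits inside the left half $I_l$ of $I$, inside the right half $I_r$, or outside $I$, and since $h$ is constant on each of $I_l$, $I_r$ and $\mathbb{R}^+\setminus I$, one gets $E_jh(x)=h(x)$ for every $x$. Substituting and telescoping,
\begin{equation*}
Th=\sum_{j\geq j_0+1}\alpha_j\,h=\Bigl(\sum_{j\geq j_0+1}(\Lambda_{j-1}-\Lambda_j)\Bigr)h=\bigl(\Lambda_{j_0}-\lim_{j\to+\infty}\Lambda_j\bigr)h=\Lambda_{j_0}h,
\end{equation*}
the last equality by (5.e). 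Since $j_0=j(h)$, this is the assertion $Th=\Lambda_{j(h)}h$.

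The argument is essentially bookkeeping; the only places needing mild care are the interchange of summation and integration (handled by the $l^1$ bound above) and the harmless behaviour at $x=y$. If one prefers to avoid the averaging operators, there is a fully elementary variant: for $x\notin I$ every $y\in I$ gives the same value $\delta(x,y)$, so $\int_I\varphi(\delta(x,y))h(y)\,dy=0=\Lambda_{j_0}h(x)$; for $x\in I$, say $x\in I_l$, one splits $\int_I=\int_{I_l}+\int_{I_r}$, uses that $\delta(x,y)=\abs{I}$ is constant for $y\in I_r$, evaluates $\int_{I_l}\varphi(\delta(x,y))\,dy=\sum_{n\leq -j_0-1}k_n2^{n-1}$ by subtracting the tail from $\int_{\mathbb{R}^+}\varphi(\delta(x,y))\,dy=1$, and recognizes $2^{j_0/2}\bigl(\sum_{n\leq -j_0-1}k_n2^{n-1}-k_{-j_0}2^{-j_0-1}\bigr)$ as $\Lambda_{j_0}h(x)$ via formula (4.e). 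I expect the main effort to be nothing deeper than choosing the cleanest of these equivalent computations and stating the dyadic-averaging facts about Haar functions precisely.
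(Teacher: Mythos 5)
Your proof is correct and follows essentially the same route as the paper: expand $K(x,y)=\sum_j\alpha_j\varphi_j(\delta(x,y))$, justify the interchange by the $\ell^1$ bound on $\overline{\alpha}$, recognize that $\varphi_j(\delta(x,\cdot))$ integrates against $h$ to give the dyadic average $E_jh(x)$, and observe that $E_jh=0$ for $j\leq j(h)$ while $E_jh=h$ for $j>j(h)$, yielding $Th=\sum_{j>j(h)}\alpha_j\,h=\Lambda_{j(h)}h$. The paper's $\psi(x,l)$ is exactly your $E_lh(x)$, and your closing telescope is just the definition $\Lambda_j=\sum_{l>j}\alpha_l$ from (4.c) combined with (5.e).
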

\begin{proof}
Since the sequence $(\alpha_l:l\in \mathbb{Z})$ belongs to $\ell^1(\mathbb{Z})$ and we can interchange orders of integration and summation
in order to compute $Th$. In fact,
\begin{equation*}
T h(x) = \int_{y\in\mathbb{R}^+} \varphi(\delta(x,y))h(y) dy
=\int_{y\in\mathbb{R}^+}\left(\sum_{l\in \mathbb{Z}}\alpha_l\varphi_l(\delta(x,y))\right) h(y) dy
= \sum_{l\in \mathbb{Z}}\alpha_l\left(2^{l}\int_{\{y: \delta(x,y)\leq 2^{-l}\}}h(y)dy\right).
\end{equation*}
Let us prove that
\begin{equation*}
\psi(x,l)=2^{l}\int_{\{y: \delta(x,y)\leq 2^{-l}\}}h(y) dy=\chi_{\{l>j(h)\}}(l) h(x).
\end{equation*}
If $x\notin I(h)$, since $\{y:\delta(x,y)\leq 2^l\}$ is the only dyadic interval $I_l^x$ containing $x$ of length $2^l$, only two situations are possible, $I_l^x\cap I(h)=\emptyset$ or $I_l^x\supset I(h)$, in both cases the integral vanish and $\psi(x,l)=0=\chi_{\{l<-j(h)\}}(l) h(x)$. Take now $x\in I(h)$. Assume first that $x\in I_l(h)$ (the left half of $I(h)$). So that $\psi(x,l)=2^{-l}\int_{I_l^x}h(y) dy=0$ if $l\leq j(h)$, since $I_l^x\supset I(h)$. When $l>j(h)$ we have that $h\equiv \abs{I(h)}^{-1/2}$ on $I_l^x$, hence $\psi(l,x)=2^{-l}\abs{I(h)}^{-1/2}\abs{I_l^x}=\abs{I(h)}^{-1/2}=h(x)$. In a similar way, for $x\in I_r(h)$, we get $\psi(l,x)=-\abs{I(h)}^{-1/2}=h(x)$.
\end{proof}

Notice that the eigenvalues $\lambda(h)$ tends to zero when the resolution $j(h)$ tends to infinity. Moreover this convergence is monotonic when all the $\alpha_l$ are nonnegative. Notice also that the eigenvalues depend only on the resolution level of $h$, but not on the position $k$ of its support. Sometimes we shall write $\lambda_j$, $j\in \mathbb{Z}$, instead of $\lambda(h)$ when  $j$ is the scale of the support of $h$. With the above result, and using the fact that the Haar system $\mathscr{H}$ is an orthonormal basis for $L^2(\mathbb{R}^+)$, we see that, the action of $T$ on $L^2(\mathbb{R}^+)$ can be regarded as a multiplier operator on the scales.

\begin{lemma}
Let $K$ and $T$ as in Theorem~\ref{thm:autovalores}. The diagram

\begin{center}
\begin{tikzpicture}
  \matrix (m) [matrix of math nodes,row sep=3em,column sep=4em,minimum width=2em]
  {
   L^2(\mathbb{R}^+) & \ell^2(\mathbb{Z}) \\
     L^2(\mathbb{R}^+) & \ell^2(\mathbb{Z}) \\};
  \path[-stealth]
    (m-1-1) edge node [left] {$T$} (m-2-1)
            edge node [below] {$H$} (m-1-2)
    (m-2-1.east|-m-2-2) edge node [below] {$H$}
            node [above] {} (m-2-2)
    (m-1-2) edge node [right] {$M$} (m-2-2);
\end{tikzpicture}
\end{center}
commutes, where $H(f)=(\proin{f}{h}: h\in\mathscr{H})$ and $M(a_h:h\in\mathscr{H})=(\lambda(h)a_h:h\in\mathscr{H})$. In particular, $\norm{Tf}^2_2=\sum_{h\in\mathscr{H}}
\lambda^2(h)\abs{\proin{f}{h}}^2$.
\end{lemma}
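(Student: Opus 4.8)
The plan is to read everything off Theorem~\ref{thm:autovalores} together with the fact that $\mathscr{H}$ is an orthonormal basis of $L^2(\mathbb{R}^+)$. First I would record that, by Parseval's identity, $H$ is an isometric isomorphism from $L^2(\mathbb{R}^+)$ onto $\ell^2$ (indexed by $\mathscr{H}$, which we identify with $\ell^2(\mathbb{Z})$ after enumerating the Haar system); in particular $H$ is injective, so to prove that the diagram commutes it is enough to check $H(Tf)=M(H(f))$ for every $f\in L^2(\mathbb{R}^+)$, i.e. that $\proin{Tf}{h}=\lambda(h)\proin{f}{h}$ for every $h\in\mathscr{H}$.

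Next I would fix $f\in L^2(\mathbb{R}^+)$ and expand $f=\sum_{h\in\mathscr{H}}\proin{f}{h}h$, with convergence in $L^2(\mathbb{R}^+)$. Since the lemma preceding Theorem~\ref{thm:autovalores} shows that $T$ is bounded (indeed non-expansive) on $L^2(\mathbb{R}^+)$, the operator $T$ commutes with this convergent series, so $Tf=\sum_{h\in\mathscr{H}}\proin{f}{h}\,Th$ in $L^2(\mathbb{R}^+)$. By Theorem~\ref{thm:autovalores}, $Th=\lambda(h)h$, whence $Tf=\sum_{h\in\mathscr{H}}\lambda(h)\proin{f}{h}h$. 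Taking the inner product of this identity against a fixed $g\in\mathscr{H}$ and using continuity of $\proin{\cdot}{g}$ and orthonormality of $\mathscr{H}$ gives $\proin{Tf}{g}=\lambda(g)\proin{f}{g}$, which is exactly $H(Tf)=M(H(f))$. Hence the diagram commutes.

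Finally, the norm identity follows by applying Parseval's identity to $Tf\in L^2(\mathbb{R}^+)$: since $\set{\proin{Tf}{h}}_{h\in\mathscr{H}}$ is the sequence of Haar coefficients of $Tf$,
\[
\norm{Tf}^2_2=\sum_{h\in\mathscr{H}}\abs{\proin{Tf}{h}}^2=\sum_{h\in\mathscr{H}}\lambda^2(h)\abs{\proin{f}{h}}^2,
\]
which is also just the statement that $\norm{Tf}_2=\norm{M(H(f))}_{\ell^2}$ since $H$ is an isometry.

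The only point that needs a word of care — and it is the sole ``obstacle'' here — is the interchange of $T$ with the infinite Haar expansion of $f$; this is legitimate precisely because $T$ is a bounded linear operator on $L^2(\mathbb{R}^+)$, a fact already established in the excerpt. Everything else is a direct consequence of the spectral identity $Th=\lambda(h)h$ and of Parseval's theorem, so the proof is short.
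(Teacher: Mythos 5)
Your proof is correct, and it is exactly the argument the paper implicitly has in mind: the lemma is stated without proof, as an immediate consequence of the fact that $\mathscr{H}$ is an orthonormal basis, that $T$ is bounded on $L^2(\mathbb{R}^+)$, and the spectral identity $Th=\lambda(h)h$ from Theorem~\ref{thm:autovalores}. You correctly flag the one point that deserves mention — interchanging $T$ with the $L^2$-convergent Haar expansion — and justify it by the boundedness of $T$ established in the preceding lemma; the rest is Parseval.
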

The characterization of the space $L^p(\mathbb{R}^+)$ ($1<p<\infty$), Theorem~\ref{thm:characterizationLp} above,  provides a similar result for the whole scale of Lebesgue spaces, $1<p<\infty$ with the only caveat that when $p\neq 2$ the norms are only equivalent. The next statement contains this observation.
\begin{theorem}\label{thm:op.Lp.haar}
With $K$ and $T$ as before and $1<p<\infty$ we have that
\begin{equation*}
\norm{Tf}_p\simeq\norm{\biggl(\sum_{h\in\mathscr{H}}(\lambda(h))^2\abs{\proin{f}{h}}^2\abs{I(h)}^{-1}\chi_{I(h)}\biggr)^{\tfrac{1}{2}}}_p
\end{equation*}
with constants which do not depend on $f$.
\end{theorem}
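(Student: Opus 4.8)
The statement is a direct consequence of the Littlewood--Paley type characterization of $L^p(\mathbb{R}^+)$ (Theorem~\ref{thm:characterizationLp}) applied to the function $Tf$, once we identify the Haar coefficients of $Tf$. By the boundedness lemma preceding Theorem~\ref{thm:autovalores}, $T$ maps $L^p(\mathbb{R}^+)$ non-expansively into itself for $1<p<\infty$; in particular $Tf\in L^p(\mathbb{R}^+)\subset L^1_{\mathrm{loc}}(\mathbb{R}^+)$, so each coefficient $\proin{Tf}{h}$ is well defined (every $h\in\mathscr{H}$ is bounded with compact support $I(h)$). Hence Theorem~\ref{thm:characterizationLp} applies to $Tf$ and yields
\[
C_1\norm{Tf}_p\leq\norm{\left(\sum_{h\in\mathscr{H}}\abs{\proin{Tf}{h}}^2\abs{I(h)}^{-1}\chi_{I(h)}\right)^{1/2}}_p\leq C_2\norm{Tf}_p ,
\]
with $C_1,C_2$ depending only on $p$.

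The only computational point is the identity $\proin{Tf}{h}=\lambda(h)\proin{f}{h}$ for every $h\in\mathscr{H}$ and every $f\in L^p(\mathbb{R}^+)$. For $f\in L^2(\mathbb{R}^+)$ this is immediate from Theorem~\ref{thm:autovalores} together with the self-adjointness of $T$ (the kernel $K$ is symmetric). For general $f\in L^p$, fix $h\in\mathscr{H}$: since $h$ is bounded, $\supp h=I(h)$, and $T\abs{f}\in L^p$ with $\norm{T\abs{f}}_p\leq\norm{f}_p$, Hölder on the bounded set $I(h)$ gives
\[
\int_{\mathbb{R}^+}\int_{\mathbb{R}^+}K(x,y)\abs{f(y)}\abs{h(x)}\,dy\,dx\leq\norm{h}_\infty\int_{I(h)}T\abs{f}(x)\,dx\leq\norm{h}_\infty\abs{I(h)}^{1/p'}\norm{f}_p<\infty ,
\]
so Fubini's theorem applies and, using the symmetry of $K$ and Theorem~\ref{thm:autovalores},
\[
\proin{Tf}{h}=\int_{\mathbb{R}^+}f(y)\left(\int_{\mathbb{R}^+}K(y,x)h(x)\,dx\right)dy=\proin{f}{Th}=\lambda(h)\proin{f}{h}.
\]

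Substituting $\proin{Tf}{h}=\lambda(h)\proin{f}{h}$ in the middle term of the first display gives exactly
\[
C_1\norm{Tf}_p\leq\norm{\left(\sum_{h\in\mathscr{H}}\lambda(h)^2\abs{\proin{f}{h}}^2\abs{I(h)}^{-1}\chi_{I(h)}\right)^{1/2}}_p\leq C_2\norm{Tf}_p ,
\]
which is the claimed equivalence, with constants ($C_1^{-1}$ and $C_2$) independent of $f$ since those in Theorem~\ref{thm:characterizationLp} are. There is no real obstacle here; the only step requiring any care is the Fubini justification for $f$ merely in $L^p$ rather than in $L^2$, which is handled by the non-expansiveness of $T$ on $L^p$ and Hölder's inequality on the compact support of $h$ as above.
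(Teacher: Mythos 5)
Your proposal is correct and takes essentially the same route the paper intends: the paper explicitly states that the theorem is "the observation" that Theorem~\ref{thm:characterizationLp} (the square-function characterization of $L^p$) applied to $Tf$, combined with the eigenvalue relation from Theorem~\ref{thm:autovalores}, gives the result, and this is precisely what you carry out. Your Fubini justification for $\proin{Tf}{h}=\lambda(h)\proin{f}{h}$ when $f$ is merely in $L^p$ (rather than $L^2$) is a detail the paper leaves implicit, and you handle it correctly via the non-expansiveness of $T$ on $L^p$, the compact support of $h$, and Hölder.
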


\begin{corollary}\label{coro:representationK}
For every $K\in\mathscr{K}$ and $(\lambda(h):h\in\mathscr{H})$ as in Theorem~\ref{thm:autovalores} we have the representation
\begin{equation*}
K(x,y)=\sum_{h\in\mathscr{H}}\lambda(h)h(x)h(y).
\end{equation*}
\end{corollary}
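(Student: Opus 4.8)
The plan is to expand the right-hand side and identify it with $K$ using the spectral decomposition already established. Since $\mathscr{H}$ is an orthonormal basis of $L^2(\mathbb{R}^+)$, the kernel $K(x,y)=\varphi(\delta(x,y))$, viewed for fixed $x$ as a function of $y$ — which lies in $L^2(\mathbb{R}^+)$ because $\varphi\in L^1$ and $\varphi$ is bounded (indeed $|\varphi(s)|\le\sum_l|\alpha_l|<\infty$ by (5.a), and it has the right integrability by Lemma~\ref{lemma:deltaintegrability}) — admits the Haar expansion $K(x,\cdot)=\sum_{h\in\mathscr{H}}\langle K(x,\cdot),h\rangle\,h$. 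So the first step is simply to compute the coefficient $\langle K(x,\cdot),h\rangle=\int_{\mathbb{R}^+}K(x,y)h(y)\,dy$, which is precisely $Th(x)$ by the definition of $T$. By Theorem~\ref{thm:autovalores} this equals $\lambda(h)h(x)$. Substituting back gives $K(x,y)=\sum_{h\in\mathscr{H}}\lambda(h)h(x)h(y)$ for a.e.\ $y$, for each fixed $x$.

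The second step is to justify the interchange and the mode of convergence carefully. For fixed $x$, the expansion converges in $L^2(dy)$; one should note that this is exactly the statement $K(x,\cdot)=\sum_h \lambda(h)h(x)\,h$ in $L^2(\mathbb{R}^+)$. To upgrade to a genuine pointwise identity one can invoke that both sides depend only on $\delta(x,y)$ and are constant on each level set $L(2^j)$: on $L(2^j)$ the left side is $k_j$, and the partial sums of the right side over $\{h: j(h)\ge -j+1\text{ or so}\}$ telescope via the relation $\alpha_j=\Lambda_{j-1}-\Lambda_j$ together with formula (\ref{item:formulaerelated}.b), reproducing exactly the computation of $\varphi(\delta(x,y))$ in the proof of part (2) of Lemma~\ref{lemma:kerneldelta1}. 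Concretely, fixing $x\in I(h_0)$ for the relevant scales and summing $\lambda(h)h(x)h(y)$ over all $h$ supported in dyadic intervals containing both $x$ and $y$ reproduces $\sum_{j\ge\log_2\delta(x,y)}(k_j-k_{j+1})=k_{\log_2\delta(x,y)}=K(x,y)$, since for a fixed pair $(x,y)$ only the Haar functions whose support is an ancestor of the smallest dyadic interval containing both contribute, and $h(x)h(y)=|I(h)|^{-1}$ or $0$ according to whether $x,y$ lie in the same half of $I(h)$.

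The main obstacle, such as it is, is bookkeeping: making precise which $h\in\mathscr{H}$ give a nonzero contribution to $h(x)h(y)$ for a fixed pair $(x,y)$, and checking that the resulting (finite-at-each-scale, but infinite) sum telescopes correctly to $k_{\log_2\delta(x,y)}$. This is where one must be slightly careful that the sum over $h$ at a fixed scale $j$ collapses to a single term (the unique $h$ with $x\in I(h)$) and that the tail behaves well, which is guaranteed by $k_j\to 0$ as $j\to\infty$ (equivalently $\Lambda_j\to 0$, property (5.e)) and $\overline{\alpha}\in\ell^1$. I would therefore present the argument in two layers: first the clean functional-analytic statement (expansion of $K(x,\cdot)$ in the Haar basis with coefficients $Th(x)=\lambda(h)h(x)$), then a remark that, because everything is locally constant in $\delta$, the $L^2$ identity is in fact a pointwise one, with the telescoping computation relegated to a sentence referring back to the proof of Lemma~\ref{lemma:kerneldelta1}(2).
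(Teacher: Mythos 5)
Your first step contains a genuine gap: the bound $|\varphi(s)|\le\sum_l|\alpha_l|$ is not correct, because the building blocks $\varphi_l(s)=2^{l}\chi_{(0,2^{-l}]}(s)$ are not uniformly bounded by $1$; for small $s$ the relevant indices $l$ are large and positive, where $\varphi_l(s)=2^l$ can be arbitrarily large. Concretely, $\varphi(s)=k_j$ for $s\in(2^{j-1},2^j]$, and the only general constraints on $\overline{k}$ are $k_j\ge 0$ and $\sum_j k_j2^{j-1}=1$. These permit, for instance, $k_j\asymp 2^{-j}/j^2$ as $j\to-\infty$, in which case $\varphi$ is unbounded near $0$ and $\sum_j k_j^2\,2^{j-1}=\infty$, so for fixed $x$ the slice $K(x,\cdot)$ need not belong to $L^2(\mathbb{R}^+)$. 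You therefore cannot directly expand $K(x,\cdot)$ in the Haar basis as an $L^2$ identity, and the passage from the $L^2$ expansion to a pointwise statement has nothing to start from.

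Your Step~2, on the other hand, is sound and in fact stands entirely on its own, without Step~1. For $x\ne y$ with $\delta(x,y)=2^{j_0}$, only the ancestor chain of the minimal dyadic interval containing both points contributes: there is exactly one $h^{(m)}$ at each level $m\le -j_0$, with $h^{(-j_0)}(x)h^{(-j_0)}(y)=-2^{-j_0}$ and $h^{(m)}(x)h^{(m)}(y)=2^m$ for $m<-j_0$, so that $\sum_h\lambda(h)h(x)h(y)=-2^{-j_0}\Lambda_{-j_0}+\sum_{m<-j_0}2^m\Lambda_m$, which by (4.f) is exactly $k_{j_0}=K(x,y)$; absolute convergence follows from $|\Lambda_m|\le 1$ (properties (5.g)--(5.h)). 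This is the same computation the paper carries out for the specific kernel $K_t$ in the proof of Lemma~\ref{lemma:stability23}. The paper's own proof of the corollary takes yet another route: it applies $T$ to finite linear combinations $f$ of Haar functions, rewrites $Tf(x)=\int\bigl(\sum_h\lambda(h)h(x)h(y)\bigr)f(y)\,dy$, and concludes by density of such $f$ in $L^2(\mathbb{R}^+)$. Your direct telescoping is arguably the more elementary and self-contained argument; I would simply drop Step~1, or keep it only as motivation, and present Step~2 as the proof.
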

\begin{proof}
For $f=\sum_{h\in\mathscr{H}}\proin{f}{h}h$ with $\proin{f}{h}\neq 0$ only for finitely many Haar functions $h\in\mathscr{H}$, we have that
\begin{align*}
\int_{\mathbb{R}^+}K(x,y)f(y)dy=Tf(x)&=\sum_{h\in\mathscr{H}}\proin{f}{h}Th(x)\\
&=\sum_{h\in\mathscr{H}}\left(\int_{y\in\mathbb{R}^+}f(y)h(y)dy\right)\lambda(h)h(x)\\
&=\int_{y\in\mathbb{R}^+}\left(\sum_{h\in\mathscr{H}}\lambda(h)h(y)h(x)\right)f(y)dy.
\end{align*}
Since the space of such functions $f$ is dense in $L^2(\mathbb{R}^+)$ we have that $K(x,y)=\sum_h\lambda(h)h(x)h(y)$.
\end{proof}

\section{Stability of Markov kernels}\label{sec:stability}

In the case of the classical CLT the key properties of the distribution of the independent random variables $X_j$ are contained in the Gaussian central limit itself. Precisely, $(2\pi t)^{-1/2}e^{-\abs{x}^2/4t}$ is the distribution limit of $n^{-1/2}\sum_{j=1}^n X_j$ when $X_j$ are independent and are equi-distributed with variance $t$ and mean zero. Our ``gaussian'' is the Markov kernel $K_t(x,y)$ defined in $\mathbb{R}^+\times \mathbb{R}^+$ by applying Lemma~\ref{lemma:basiccharacterizationK} to the sequence $\Lambda_j=e^{-t2^{j}}$, $j\in \mathbb{Z}$ for fixed $t$. We may also use the Haar representation of $K_t(x,y)$ given by Corollary~\ref{coro:representationK} in \S~\ref{sec:spectralanalysis}. In this way we can write this family of kernels as $K_t(x,y)=\sum_{h\in\mathscr{H}}e^{-t2^{j(h)}}h(x)h(y)$. As we shall see, after obtaining estimates for the behavior of $K$ for large $\delta(x,y)$, this kernel has heavy tails. In particular, the analogous of the variance given by $\int_{y\in \mathbb{R}^+}K_t(x,y)\delta^2(x,y)dy$ is not finite. This kernel looks more as a dyadic version of Cauchy type distributions than of Gauss type distributions. Which is an agreement with the fact that $K_t$ solves a fractional differential equation and the natural processes are of Lévy type instead of Wiener Brownian. As a consequence, the classic moment conditions have to be substituted by stability type behavior at infinity.

\begin{lemma}\label{lemma:gaussianPsistability23}
Set for $r>0$
\begin{equation*}
\psi(r)=\frac{1}{r}\left(\sum_{j\geq 1}2^{-j}e^{-(2^jr)^{-1}}-e^{-r^{-1}}\right).
\end{equation*}
Then $\psi$ is well defined on $\mathbb{R}^+$ with values in $\mathbb{R}^+$. And
\begin{equation*}
r^{2}\psi(r)\to \frac{2}{3} \textrm{\quad as \quad} r\to\infty.
\end{equation*}
\end{lemma}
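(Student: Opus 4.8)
The plan is to analyze the function $\psi(r)$ by reducing it to a problem about the asymptotics of a power series as $r\to\infty$, i.e. as its argument tends to zero. First I would verify well-definedness and positivity: the series $\sum_{j\ge 1}2^{-j}e^{-(2^jr)^{-1}}$ converges absolutely for every $r>0$ since the terms are bounded by $2^{-j}$, and positivity amounts to showing $\sum_{j\ge 1}2^{-j}e^{-(2^jr)^{-1}}>e^{-r^{-1}}$. Writing $e^{-r^{-1}}=\sum_{j\ge 1}2^{-j}e^{-r^{-1}}$ (since $\sum_{j\ge1}2^{-j}=1$), positivity follows termwise because $e^{-(2^jr)^{-1}}>e^{-r^{-1}}$ for every $j\ge1$ (the exponent $(2^jr)^{-1}$ is strictly smaller than $r^{-1}$). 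Hence $\psi(r)>0$ on $\mathbb{R}^+$.

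For the asymptotics, I would set $u=r^{-1}\to 0^+$ and study
\begin{equation*}
r^2\psi(r)=\frac{1}{u}\left(\sum_{j\ge1}2^{-j}e^{-2^{-j}u}-e^{-u}\right)=\frac{1}{u}\left(\sum_{j\ge1}2^{-j}\bigl(e^{-2^{-j}u}-e^{-u}\bigr)\right).
\end{equation*}
The natural approach is to expand each exponential in its Taylor series and sum in $j$. Writing $e^{-2^{-j}u}-e^{-u}=\sum_{k\ge1}\frac{(-u)^k}{k!}\bigl(2^{-jk}-1\bigr)$ and summing against $2^{-j}$ using the geometric series $\sum_{j\ge1}2^{-j}2^{-jk}=\frac{2^{-(k+1)}}{1-2^{-(k+1)}}=\frac{1}{2^{k+1}-1}$, one obtains
\begin{equation*}
\sum_{j\ge1}2^{-j}\bigl(e^{-2^{-j}u}-e^{-u}\bigr)=\sum_{k\ge1}\frac{(-u)^k}{k!}\left(\frac{1}{2^{k+1}-1}-1\right).
\end{equation*}
The $k=1$ term is $(-u)\left(\frac13-1\right)=\frac23 u$, so dividing by $u$ gives $r^2\psi(r)=\frac23+\sum_{k\ge2}\frac{(-u)^{k-1}}{k!}\left(\frac{1}{2^{k+1}-1}-1\right)\to\frac23$ as $u\to0^+$.

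The main obstacle, and the place requiring care, is justifying the interchange of the two summations (over $j$ and over the Taylor index $k$) and controlling the tail of the remaining series uniformly for small $u$. I would handle this by a Fubini/Tonelli argument: the double series $\sum_{j\ge1}\sum_{k\ge1}2^{-j}\frac{u^k}{k!}\bigl|2^{-jk}-1\bigr|$ is dominated by $\sum_{j\ge1}2^{-j}\sum_{k\ge1}\frac{u^k}{k!}=e^{u}-1<\infty$, so the rearrangement is legitimate. For the remainder, $\bigl|\frac{1}{2^{k+1}-1}-1\bigr|\le1$, so $\bigl|\sum_{k\ge2}\frac{(-u)^{k-1}}{k!}(\cdots)\bigr|\le\sum_{k\ge2}\frac{u^{k-1}}{k!}\le e^{u}-1-u$, which tends to $0$ as $u\to0^+$. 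This yields the claimed limit $r^2\psi(r)\to\frac23$. As an alternative to the power-series manipulation, one could also write $\psi$ directly in terms of an integral using $2^{-j}=\int$ of a step function, or simply estimate the first two terms of each exponential directly with explicit error bounds $|e^{-x}-1+x|\le x^2/2$; I would present whichever is cleaner, but the Taylor-series route above is the most transparent.
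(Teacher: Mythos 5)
Your proof is correct in substance and computes the same limit by essentially the same underlying idea as the paper: rewrite $e^{-r^{-1}} = \sum_{j\ge1} 2^{-j} e^{-r^{-1}}$ so that $\psi(r) = \tfrac{1}{r}\sum_{j\ge1}2^{-j}\bigl(e^{-(2^jr)^{-1}}-e^{-r^{-1}}\bigr)$, which gives positivity termwise, and then linearize the exponentials near $0$. The difference is in the execution of the limit. The paper simply passes the limit $r\to\infty$ term by term inside the series and evaluates $\sum_{j\ge1}2^{-j}(1-2^{-j})=\tfrac23$, leaving the justification of the interchange implicit (dominated convergence, using $\bigl|e^{-(2^jr)^{-1}}-e^{-r^{-1}}\bigr|\le r^{-1}$ would do it). You instead expand each exponential fully in a power series in $u=r^{-1}$, re-sum the double series by Fubini, and isolate the $k=1$ coefficient; this is a more explicit and self-contained treatment that makes the error control visible rather than implicit. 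Both yield the same answer and neither has a structural gap.

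One small correction in your tail estimate: $\sum_{k\ge2}\tfrac{u^{k-1}}{k!}$ equals $\tfrac{1}{u}\bigl(e^u-1-u\bigr)$, not something $\le e^u-1-u$; indeed the inequality you wrote fails already at $u=0.1$. The correct quantity $\tfrac{e^u-1-u}{u}\sim \tfrac{u}{2}\to 0$ as $u\to 0^+$, so your conclusion stands after replacing that intermediate bound.
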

\begin{proof}
Since $e^{-(2^jr)^{-1}}$ is bounded above we see that $\psi(r)$ is finite for every $r>0$. On the other hand since
$\psi(r)=\tfrac{1}{r}\sum_{j\geq 1}2^{-j}[e^{-(2^jr)^{-1}}-e^{-r^{-1}}]$ and terms in brackets are positive we see that $\psi(r)>0$ for every $r>0$. Let us check the behavior of $\psi$ at infinity
\begin{equation*}
r^{2}\psi(r)=\sum_{j\geq 1}\frac{2^{-j}[e^{-(2^jr)^{-1}}-e^{-r^{-1}}]}{r^{-1}}\to \sum_{j\geq 1}2^{-j}(1-2^{-j})=\frac{2}{3}.
\end{equation*}
\end{proof}

\begin{lemma}\label{lemma:stability23}
Let $t>0$ be given. Set $\Lambda^{(t)}_j=e^{-t2^{j}}$, $j\in \mathbb{Z}$. Let $K_t(x,y)$ be the kernel that Lemma~\ref{lemma:basiccharacterizationK} associated to $\overline{\Lambda^{(t)}}$. Then $K_t\in\mathscr{K}$ and since $K_t(x,y)=\tfrac{1}{t}\psi(\tfrac{\delta(x,y)}{t})$, with $\psi$ as in Lemma~\ref{lemma:gaussianPsistability23}, we have
\begin{equation}\label{eq:propertystabilityone}
\delta(x,y)^{2}K_t(x,y)\to \frac{2}{3}\,t
\end{equation}
for $\delta(x,y)\to+\infty$.
\end{lemma}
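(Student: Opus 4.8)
The plan is to connect the kernel $K_t$ to the function $\psi$ of Lemma~\ref{lemma:gaussianPsistability23} via the scaling relation $K_t(x,y)=\tfrac1t\psi(\tfrac{\delta(x,y)}{t})$, and then transport the asymptotics $r^2\psi(r)\to\tfrac23$ to the desired conclusion $\delta(x,y)^2K_t(x,y)\to\tfrac23 t$ by an elementary change of variables. The first step is to verify that $K_t\in\mathscr{K}$: the sequence $\overline{\Lambda^{(t)}}=(e^{-t2^j}:j\in\mathbb{Z})$ is strictly decreasing (since $t>0$), tends to $1$ as $j\to-\infty$ and to $0$ as $j\to+\infty$, so it satisfies (5.d) and (5.e); hence Lemma~\ref{lemma:basiccharacterizationK} applies and produces a unique $K_t\in\mathscr{K}$ with $\overline{\Lambda}=\overline{\Lambda^{(t)}}$.

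The heart of the argument is the identity $K_t(x,y)=\tfrac1t\psi(\tfrac{\delta(x,y)}{t})$. To obtain it I would use the explicit formula from Lemma~\ref{lemma:basiccharacterizationK}, $K_t(x,y)=\sum_{j\in\mathbb{Z}}(\Lambda^{(t)}_{j-1}-\Lambda^{(t)}_j)\varphi_j(\delta(x,y))$, together with the value of $\varphi_j$; since $\delta(x,y)=2^m$ for some integer $m$, evaluating the telescoping-type sum gives $K_t(x,y)=k_m$ where, by (4.f) or directly by the computation in the proof of Lemma~\ref{lemma:kerneldelta1}(2), $k_m=-2^{-m}\Lambda^{(t)}_{-m}+\sum_{i\geq m+1}2^{-i}\Lambda^{(t)}_{-i}$. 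Substituting $\Lambda^{(t)}_j=e^{-t2^j}$ and re-indexing, one reads off exactly $k_m=\tfrac1t\psi(2^m/t)=\tfrac1t\psi(\delta(x,y)/t)$, matching the definition of $\psi$ (the term $-e^{-r^{-1}}$ corresponds to the $-2^{-m}\Lambda^{(t)}_{-m}$ piece and the series $\sum_{j\ge1}2^{-j}e^{-(2^jr)^{-1}}$ to the tail sum). The main obstacle is purely bookkeeping: getting the index shifts and the factor $1/t$ to line up correctly between the formula for $k_m$ and the definition of $\psi$; I would do this carefully, perhaps by first treating $t=1$ to fix the indexing and then restoring $t$ by the dyadic homogeneity of $\delta$ (Lemma~\ref{lemma:deltahomogeneity}) is not quite available since $t$ need not be a power of two, so instead I simply carry $t$ through the algebra directly, noting $\varphi_j(2^m/t)=2^j\chi_{(0,2^{-j}]}(2^m/t)$.

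Once the identity $K_t(x,y)=\tfrac1t\psi(\tfrac{\delta(x,y)}{t})$ is established, the asymptotic statement is immediate: write $r=\delta(x,y)/t$, so that $\delta(x,y)\to+\infty$ is equivalent to $r\to+\infty$, and compute
\begin{equation*}
\delta(x,y)^2K_t(x,y)=\delta(x,y)^2\cdot\frac1t\psi\!\left(\frac{\delta(x,y)}{t}\right)=t\cdot\left(\frac{\delta(x,y)}{t}\right)^2\psi\!\left(\frac{\delta(x,y)}{t}\right)=t\,r^2\psi(r)\longrightarrow t\cdot\frac23,
\end{equation*}
using Lemma~\ref{lemma:gaussianPsistability23}. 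This gives \eqref{eq:propertystabilityone}. Finally, the positivity $K_t\in\mathscr{K}$ is already guaranteed by the first step, and the positivity of $\psi$ asserted in Lemma~\ref{lemma:gaussianPsistability23} provides a consistency check on the identity. The only real work is the second paragraph's index chase; everything else is formal.
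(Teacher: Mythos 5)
Your proposal is correct, and the overall architecture is the same as the paper's: verify that $\overline{\Lambda^{(t)}}$ meets the hypotheses of Lemma~\ref{lemma:basiccharacterizationK}, establish the pointwise identity $K_t(x,y)=\tfrac1t\psi(\delta(x,y)/t)$, then transport the asymptotics of $\psi$ from Lemma~\ref{lemma:gaussianPsistability23} by the $r=\delta(x,y)/t$ substitution. The one place where you genuinely diverge from the paper is in deriving the identity. You work from the $\varphi$-representation and formula (4.f) of Lemma~\ref{lemma:kerneldelta1}, writing $k_m=-2^{-m}e^{-t2^{-m}}+\sum_{i\ge m+1}2^{-i}e^{-t2^{-i}}$ and matching this to $\tfrac1t\psi(2^m/t)$ by the re-indexing $i=m+j$. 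The paper instead invokes the Haar representation $K_t(x,y)=\sum_{h\in\mathscr{H}}e^{-t|I(h)|^{-1}}h(x)h(y)$ from Corollary~\ref{coro:representationK}, identifies the chain $I_0\subset I_1\subset\cdots$ of dyadic intervals containing both $x$ and $y$ (with $|I_0|=\delta(x,y)$), and computes the products $h^{(0)}(x)h^{(0)}(y)=-\delta(x,y)^{-1}$ and $h^{(i)}(x)h^{(i)}(y)=(2^i\delta(x,y))^{-1}$ for $i\ge1$ to arrive at the same expression. Your route is a touch shorter since it reuses a prepared coefficient formula rather than re-deriving the Haar products; the paper's route is more self-contained and makes the geometric source of the two terms in $\psi$ (the distinguished smallest interval versus its ancestors) visible. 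Both are sound, and the index chase you flagged as the only real work does indeed close as you anticipated.
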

\begin{proof}
Since $\Lambda^{(t)}_{j+1}<\Lambda^{(t)}_{j}$, for every $j\in \mathbb{Z}$, $\lim_{j\to-\infty}\Lambda^{(t)}_{j}=1$ and $\lim_{j\to +\infty}\Lambda^{(t)}_{j}=0$ we can use Lemma~\ref{lemma:basiccharacterizationK} in order to obtain the kernel $K_t(x,y)$. Now from Corollary~\ref{coro:representationK} we have that $K_t(x,y)=\sum_{h\in\mathscr{H}}e^{-t2^{j}}h(x)h(y)$. Let us check following the lines of \cite{AcAimFCAA}, that $K_t(x,y)=\tfrac{1}{t}\psi(\tfrac{\delta(x,y)}{t})$, with $\psi$ as in Lemma~\ref{lemma:gaussianPsistability23}. In fact, since $K_t(x,y)=\sum_{h\in\mathscr{H}}e^{-t\abs{I(h)}^{-1}}h(x)h(y)$, then a Haar function $h\in\mathscr{H}$ contributes to the sum when $x$ and $y$ both belong to $I(h)$. The smallest of such intervals, say $I_0=I(h^{(0)})$ is precisely the dyadic interval that determines $\delta(x,y)$. Precisely $\abs{I_0}=\delta(x,y)$. Let $h^{(1)}$ and $I_1=I(h^{(1)})$ be the wavelet and its dyadic support corresponding to one level less of resolution than that $I_0$ itself. In more familiar terms, $I_0$ is one of two son of $I_1$. In general, for each resolution level less than that of $I_0$ we find one and only one $I_i=I(h^{(i)})$ with $I_0\subset I_1\subset\ldots\subset I_i\subset\ldots$ and $\abs{I_i}=2^i\abs{I_0}$. We have to observe that except for $I_0$ where $x$ and $y$ must belong to different halves $I_{0,r}$ or $I_{0,l}$ of $I_0$, because of the minimality of $I_0$ for all the other $I_i$, $x$ and $y$ must belong to the same half $I_{i,l}$ or $I_{i,r}$ of $I_i$ because they are all dyadic intervals. These properties also show that $h^{(0)}(x)h^{(0)}(y)=-\abs{I_0}^{-1}=-\delta^{-1}(x,y)$ and, for $i\geq 1$, $h^{(i)}(x)h^{(i)}(y)=2^{-i}\abs{I_0}^{-1}=(2^i\delta(x,y))^{-1}$. Hence
\begin{align*}
K_t(x,y) &= -\frac{e^{-\tfrac{t}{\delta(x,y)}}}{\delta(x,y)}+\sum_{i\geq 1}e^{-\tfrac{t2^{-i}}{\delta(x,y)}}\frac{2^{-i}}{\delta(x,y)}\\
&= \frac{1}{\delta(x,y)}\left[\sum_{i\geq 1}2^{-i}e^{-\tfrac{t}{\delta(x,y)}2^{-i}}-e^{-\tfrac{t}{\delta(x,y)}}\right]\\
&= \frac{1}{t}\psi\left(\frac{\delta(x,y)}{t}\right).
\end{align*}
So that
\begin{equation*}
\delta(x,y)^{2}K_t(x,y)=\delta(x,y)^{2}\frac{1}{t}\psi\left(\frac{\delta(x,y)}{t}\right)
=t\left(\frac{\delta(x,y)}{t}\right)^{2}\psi\left(\frac{\delta(x,y)}{t}\right)
\end{equation*}
which from the result of Lemma~\ref{lemma:gaussianPsistability23} tends to $\tfrac{2}{3}$ when $\delta(x,y)\to +\infty$.
\end{proof}

Notice that from Lemma~\ref{lemma:deltaintegrability}-\textit{b.iv)} and the behavior at infinity of $K_t(x,y)$ provided in the previous result, we have
\begin{equation*}
\int_{R^+}K_t(x,y)\delta^2(x,y)dy=+\infty
\end{equation*}
for every $x\in \mathbb{R}^+$. Moreover, $\int_{R^+}K_t(x,y)\delta(x,y)dy=+\infty$. The adequate substitute for the property of finiteness of moments is provided by the stability involved in property \eqref{eq:propertystabilityone} in Lemma~\ref{lemma:stability23}. Since this property is going to be crucial in our main result we introduce formally the concept of stability. We say that a kernel $K$ in $\mathscr{K}$ is \textbf{\boldmath{$1$}-stable with parameter \boldmath{$\sigma>0$}} if
\begin{equation*}
\delta(x,y)^2 K(x,y)\to \sigma
\end{equation*}
for $\delta(x,y)\to\infty$. In the above limit, since the dimension of $\mathbb{R}^+$ with the metric $\delta$ equals one, we think $\delta^2$ as $\delta^{1+1}$, one for the dimension and the other for the order of stability.

Since for $K\in\mathscr{K}$ we have $K(x,y)=\varphi(\delta(x,y))$, the property of $1$-stability can be written as a condition for the behavior at infinity of profile $\varphi$. In particular, with the notation of Lemma~\ref{lemma:kerneldelta1}, the stability is equivalent to $4^jk_j\to\sigma$ as $j\to\infty$.

\section{Iteration and mollification in $\mathscr{K}$}\label{sec:iterationmollification}
As we have already observed in the introduction, the two basic operations on the  identically distributed independent random variables $X_i$ in order to obtain the means that converge in distribution to the Central Limit, translate into iterated convolution and mollification. In this section, we shall be concerned with two operations, iteration and mollification on $\mathscr{K}$ and on the subfamily $\mathscr{K}^1$ of $1$-stable kernels in $\mathscr{K}$.

In the sequel, given a kernel $K$ in $\mathscr{K}$, $\bar{\Lambda}$, $\bar{\alpha}$ and $\bar{k}$ are the sequences defined on Lemma~\ref{lemma:kerneldelta1} associated to $K$. When a  family of kernels in $\mathscr{K}$ is described by an index associated to $K$, say $K_i$, the corresponding sequences are denoted by $\bar{\Lambda}^i$, $\bar{\alpha}^i$ and $\bar{k}^i$.
\begin{lemma}
\begin{enumerate}[(a)]
\item For $K_1$ and $K_2\in\mathscr{K}$, the kernel
$$K_3(x,y)=(K_1\ast K_2)(x,y)=\int_{z\in \mathbb{R}^+}K_1(x,z)K_2(z,y)dz$$ is well defined; $K_3\in\mathscr{K}$ with
\begin{equation*}
\alpha^3_j=\alpha^1_j\lambda^2_j+\alpha^2_j\lambda^1_j+\alpha^1_j\alpha^2_j
\end{equation*}
for every $j\in \mathbb{Z}$;
\item $(\mathscr{K},\ast)$ and $(\mathscr{K}^1,\ast)$ are semigroups;
\item $\lambda^3_j=\lambda^1_j\lambda^2_j$ for every $j\in \mathbb{Z}$.
\end{enumerate}
\end{lemma}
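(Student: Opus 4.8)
The plan is to reduce everything to the spectral picture established in Section~\ref{sec:spectralanalysis}. The key structural fact is Theorem~\ref{thm:autovalores}: a kernel $K\in\mathscr{K}$ acts on $L^2(\mathbb{R}^+)$ as a Haar multiplier with eigenvalue $\lambda_j = \Lambda_j$ on the scale-$j$ Haar functions. So I would first verify that the convolution $K_3 = K_1\ast K_2$ makes sense and lands in $\mathscr{K}$: since $K_1,K_2$ are nonnegative symmetric Markov kernels, Tonelli gives $\int_{\mathbb{R}^+}K_3(x,y)\,dy = \int K_1(x,z)\bigl(\int K_2(z,y)\,dy\bigr)dz = \int K_1(x,z)\,dz = 1$, and symmetry of $K_3$ follows from symmetry of $K_1$ and $K_2$ together with Fubini. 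To see that $K_3$ depends only on $\delta$, I would use the operator description: $K_3$ is the kernel of $T_1 T_2$, where $T_i$ is the operator attached to $K_i$; both $T_1$ and $T_2$ are Haar multipliers, hence so is their composition $T_1T_2$, and by Corollary~\ref{coro:representationK} a Haar-multiplier operator with summable-enough symbol has a kernel of the form $\sum_h \mu(h)h(x)h(y)$, which is automatically a function of $\delta(x,y)$ alone by Lemma~\ref{lemma:stability23}-type reasoning (more directly: it is constant on each level set $L(2^j)$). That delivers (a) and (c) simultaneously, since the multiplier of $T_1T_2$ on scale $j$ is the product of the multipliers, i.e. $\lambda^3_j = \lambda^1_j\lambda^2_j$.

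For (c), slightly more carefully: apply $T_3 = T_1\circ T_2$ to a Haar function $h\in\mathscr{H}$ with $j(h)=j$. By Theorem~\ref{thm:autovalores}, $T_2 h = \lambda^2_j h$, and then $T_1(\lambda^2_j h) = \lambda^2_j\lambda^1_j h$. Hence $T_3 h = \lambda^1_j\lambda^2_j\, h$, so by the uniqueness of eigenvalues (Theorem~\ref{thm:autovalores} applied to $K_3$) we get $\lambda^3_j = \lambda^1_j\lambda^2_j$. The formula for $\alpha^3_j$ in (a) then follows from the identity $\alpha_j = \Lambda_{j-1}-\Lambda_j$ (item \eqref{item:formulaerelated}.d of Lemma~\ref{lemma:kerneldelta1}), since $\lambda_j = \Lambda_j$: expand
\begin{equation*}
\alpha^3_j = \Lambda^3_{j-1} - \Lambda^3_j = \lambda^1_{j-1}\lambda^2_{j-1} - \lambda^1_j\lambda^2_j,
\end{equation*}
and write $\lambda^i_{j-1} = \lambda^i_j + \alpha^i_j$ to obtain, after cancellation, $\alpha^3_j = \alpha^1_j\lambda^2_j + \alpha^2_j\lambda^1_j + \alpha^1_j\alpha^2_j$. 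One should double-check the index convention here — $\overline{\Lambda}$ and $\overline{\alpha}$ are related by $\alpha_j = \Lambda_{j-1} - \Lambda_j$, and the resolution level in Theorem~\ref{thm:autovalores} is $j(h)$, so I would make sure the $j$ in $\alpha^3_j$ matches the $j$ in $\lambda^i_j$ consistently; this is the one place where a stray shift could creep in.

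For (b): $(\mathscr{K},\ast)$ being a semigroup just means $\ast$ is a well-defined binary operation (done in (a)) and associative, which follows from associativity of operator composition $T_1(T_2T_3) = (T_1T_2)T_3$, transported back to kernels via Corollary~\ref{coro:representationK}. The substantive point is that $\mathscr{K}^1$ — the $1$-stable kernels — is closed under $\ast$. Here I would use the characterization of $1$-stability with parameter $\sigma$ as $\delta(x,y)^2 K(x,y)\to\sigma$, equivalently (Lemma~\ref{lemma:kerneldelta1} notation) $4^j k_j\to\sigma$ as $j\to\infty$. Translating to the $\Lambda$-sequence via \eqref{item:formulaerelated}.f, $k_j = -2^{-j}\Lambda_{-j}+\sum_{i\geq j+1}2^{-i}\Lambda_{-i}$, the condition $4^jk_j\to\sigma$ becomes an asymptotic condition on $\Lambda_{-j}$ as $j\to\infty$, i.e. on $\Lambda_m$ as $m\to-\infty$; concretely, since $\Lambda_m\to 1$, one expects $1-\Lambda_m \sim c\,2^{m}$ with $c$ proportional to $\sigma$. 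Then $1-\Lambda^3_m = 1-\lambda^1_m\lambda^2_m = (1-\lambda^1_m) + (1-\lambda^2_m) - (1-\lambda^1_m)(1-\lambda^2_m)$, and since each of the first two terms is $O(2^m)$ and the cross term is $O(4^m)$, we get $1-\Lambda^3_m \sim (c_1+c_2)2^m$, i.e. $K_3$ is $1$-stable with parameter $\sigma_3 = \sigma_1+\sigma_2$. \textbf{The main obstacle} is precisely this last computation: pinning down the exact asymptotic equivalence between ``$4^j k_j\to\sigma$'' and ``$1-\Lambda_m \sim (\text{const})\cdot\sigma\cdot 2^m$'' through formula \eqref{item:formulaerelated}.f requires a careful summation-by-parts / dominated-convergence argument on the tail $\sum_{i\geq j+1}2^{-i}\Lambda_{-i}$, and getting the multiplicative constant right (so that parameters genuinely add) is where the care is needed; everything else is bookkeeping around the eigenvalue product $\lambda^3_j = \lambda^1_j\lambda^2_j$.
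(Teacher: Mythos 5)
Your treatment of (a) and (c) is correct and takes a genuinely different, arguably cleaner, route than the paper. The paper proves (a) by a direct computation of $K_3(x,y)=\int K_1(x,z)K_2(z,y)\,dz$: it interchanges sums and integration, analyzes $\abs{I^j_{k(x)}\cap I^l_{k(y)}}$ according to the scale of the minimal dyadic interval $I^*$ containing both points, and in this way produces the coefficient formula $\alpha^3_j$ directly. Only afterwards does it establish (c) by composing operators. You reverse this: you get $\lambda^3_j=\lambda^1_j\lambda^2_j$ first from $T_3=T_1T_2$ and Theorem~\ref{thm:autovalores}, invoke the Haar representation of Corollary~\ref{coro:representationK} to see that a scale-multiplier kernel is automatically a function of $\delta$, and then recover the $\alpha^3_j$ formula algebraically from $\alpha_j=\Lambda_{j-1}-\Lambda_j$ and $\lambda_j=\Lambda_j$ by expanding $\Lambda^1_{j-1}\Lambda^2_{j-1}-\Lambda^1_j\Lambda^2_j$ with $\lambda^i_{j-1}=\lambda^i_j+\alpha^i_j$. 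That expansion is correct and avoids all the case analysis on dyadic interval intersections. One small caution: the step ``a Haar-multiplier kernel is a function of $\delta$ alone'' deserves a sentence of justification (the coefficients $\lambda(h)$ depend only on the scale $j(h)$, and only the chain of dyadic ancestors of $I^*$ contributes, as in the proof of Lemma~\ref{lemma:stability23}), but that is a matter of filling in rather than a gap.

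The genuine gap is in your proof of closure of $\mathscr{K}^1$ under $\ast$, and you have already flagged it yourself. Your plan is to convert $1$-stability from its definition $4^jk_j\to\sigma$ into a tail condition on $\overline{\Lambda}$, namely $2^{-m}(1-\Lambda_m)\to\tfrac{3}{2}\sigma$ as $m\to-\infty$, then exploit $\Lambda^3_m=\Lambda^1_m\Lambda^2_m$ to add the constants. The translation is true but nontrivial: to go from $k$ to $\Lambda$ you need the tail computation
\begin{equation*}
2^{j}\bigl(1-\Lambda_{-j}\bigr)=4^{j}k_{j}+\tfrac{1}{2}\sum_{l>j}2^{\,j-l}\bigl(4^{l}k_{l}\bigr)\to\sigma+\tfrac{1}{2}\sigma=\tfrac{3}{2}\sigma,
\end{equation*}
which requires dominated convergence and the boundedness of $4^lk_l$; and to come back from $\Lambda^3$ to $k^3$ you need the reverse estimate
\begin{equation*}
4^{j}k^3_{j}=2^{j}\bigl(1-\Lambda^3_{-j}\bigr)-\sum_{l\geq j+1}4^{\,j-l}\,\bigl[2^{l}\bigl(1-\Lambda^3_{-l}\bigr)\bigr]\to\tfrac{3}{2}\sigma_3-\tfrac{1}{2}\sigma_3=\sigma_3.
\end{equation*}
Neither direction of this equivalence is in the paper as a lemma, and both need a proof. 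So as written your argument for (b) is a sketch that leans on an unproven equivalence, and you correctly identify this as the obstacle. The paper avoids the detour entirely: it writes $4^jk^3_j=4^j\sum_{i\geq j}2^{-i}\alpha^3_{-i}$, substitutes the product formula $\alpha^3_{-i}=\alpha^1_{-i}\lambda^2_{-i}+\alpha^2_{-i}\lambda^1_{-i}+\alpha^1_{-i}\alpha^2_{-i}$, and splits into $I(j)+II(j)+III(j)$; the first two are controlled by $1$-stability of $K_1,K_2$ together with $\Lambda^i_{-i}\to1$, and the cross term is killed by $\overline{\alpha}\in\ell^1$. That is a self-contained route. If you wanted to keep your route, you would need to prove the $k\leftrightarrow\Lambda$ asymptotic equivalence as a separate lemma, including the constant $\tfrac{3}{2}$.

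Two minor bookkeeping points. First, for (b) you should also note associativity of $\ast$ and closure of $\mathscr{K}$ under $\ast$; you handle the latter but only gesture at the former (it does follow from associativity of operator composition, as you say, together with the kernel representation, but it should be stated). Second, you do not address nonnegativity of $K_3$ explicitly; it is immediate from nonnegativity of $K_1,K_2$ but is part of ``$K_3\in\mathscr{K}$'' and should be said.
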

\begin{proof}[Proof of (a)]
Let $K_i(x,y)=\varphi^i(\delta(x,y))$, $i=1,2$; with $\varphi^i(s)=\sum_{j\in \mathbb{Z}}\alpha^i_j\varphi_j(s)$, $\sum_{j\in \mathbb{Z}}\alpha^i_j=1$, $\sum_{j\in \mathbb{Z}}\abs{\alpha^i_j}<\infty$. Then, for $x\neq y$ both in $\mathbb{R}^+$. Set $I^*$ to denote the smallest dyadic interval containing $x$ and $y$. Then $\abs{I^*}=\delta(x,y)$ and $x$ and $y$ belong to different halves of $I^*$. From the above properties of the sequences $\bar{\alpha}^i$, $i=1,2$; we can interchange the orders of summation and integration in order to obtain
\begin{align*}
K_3(x,y) &= \int_{z\in \mathbb{R}^+}K_1(x,z)K_2(z,y)dz\\
&=\sum_{j\in \mathbb{Z}}\sum_{l\in \mathbb{Z}}2^i\alpha^1_j2^l\alpha^2_l\int_{z\in \mathbb{R}^+}\chi_{(0,2^{-j}]}(\delta(x,z))\chi_{(0,2^{-l}]}(\delta(z,y))dz\\
&=\sum_{j\in \mathbb{Z}}2^j\alpha^1_j\sum_{l\in \mathbb{Z}}2^l\alpha^2_l\abs{I^j_{k(x)}\cap I^l_{k(y)}}
\end{align*}
where $I^j_{k(x)}$ is the only dyadic interval in $\mathcal{D}^j$ such that $x\in I^j_{k(x)}$. Notice that the intersection of $I^j_{k(x)}$ and $I^l_{k(y)}$ is empty when $j$ and $l$ are both larger than the level $j^*$ of $I^*$. On the other hand, when $j$ or $l$ is smaller than or equal to $j^*$, the intersection is the smallest one. Say, if $j\leq j^*$ and $l>j$, $I^j_{k(x)}\cap I^l_{k(y)}=I^l_{k(y)}$.

With the above considerations we are now in position to compute $K_3(x,y)$ in terms of the sequences $\bar{\alpha}^i$ and $\bar{\lambda}^i$ as follows, with $c(j^*)=\{(j,l)\in \mathbb{Z}^2:j>j^* \textrm{ and } l>j^*\}$,
\begin{align*}
K_3(x,y) &= \sum\sum_{(j,l)\in \mathbb{Z}^2}2^{j+l}\alpha^1_j\alpha^2_l\abs{I^j_{k(x)}\cap I^l_{k(y)}}\\
&= \sum\sum_{\mathbb{Z}^2\setminus c(j^*)}2^{j+l}\alpha^1_j\alpha^2_l\abs{I^j_{k(x)}\cap I^l_{k(y)}}\\
&= \sum_{j\leq j^*}2^j\alpha^1_j\sum_{l>j}2^l\alpha^2_l\abs{I^l_{k(y)}}
+ \sum_{l\leq j^*}2^l\alpha^2_l\sum_{j>l}2^j\alpha^1_j\abs{I^j_{k(x)}}
+ \sum_{l\leq j^*}2^l\alpha^2_l2^l\alpha^1_l\abs{I^l_{k(y)}}\\
&= \sum_{j\leq j^*}2^j\alpha^1_j\lambda^2_j+\sum_{l\leq j^*}2^l\alpha^2_l\lambda^1_l+\sum_{l\leq j^*}2^l\alpha^1_l\alpha^2_l\\
&= \sum_{j\leq j^*}\left[\alpha^1_j\lambda^2_j+\alpha^2_j\lambda^1_j+\alpha^1_j\alpha^2_j\right]2^j\\
&= \sum_{j\in \mathbb{Z}}\left[\alpha^1_j\lambda^2_j+\alpha^2_j\lambda^1_j+\alpha^1_j\alpha^2_j\right]\varphi_j(\delta(x,y)).
\end{align*}
In other words, $K_3(x,y)=\varphi^3(\delta(x,y))$ with $\varphi^3(s)=\sum_{j\in \mathbb{Z}}\alpha^3_j\varphi_j(S)$
and $\alpha^3_j=\alpha^1_j\lambda^2_j+\alpha^2_j\lambda^1_j+\alpha^1_j\alpha^2_j$. Since, as it is easy to check by Tonelli's theorem $\int_{\mathbb{R}^+} K_3(x,y)dy=1$, we have that $K_3\in\mathscr{K}$.

\textit{Proof of (b).}  We only have to show that if $K_1$ and $K_2$ are $1$-stable kernels in $\mathscr{K}$, then $K_3=K_1\ast K_2$ is also $1$-stable. As we observed at the end of Section~\ref{sec:stability} for $K_i$ $(i=1,2)$ we have $4^jk^i_j\to\sigma_i$ when $j\to+\infty$. We have to prove that $4^jk^3_j\to \sigma_1+\sigma_2$ when $j\to+\infty$. By Lemma~\ref{lemma:kerneldelta1}, item (4.b), we can write
\begin{align*}
4^jk^3_j &= 4^j\sum_{i\geq j}2^{-i}\alpha^3_{-i}\\
&= 4^j\sum_{i\geq j}2^{-i}[\alpha^1_{-i}\lambda^2_{-i}+\alpha^2_{-i}\lambda^1_{-i}+\alpha^1_{-i}\alpha^2_{-i}]\\
&= 4^j\sum_{i\geq j}(2^{-i}\alpha^1_{-i})\lambda^2_{-i}+4^j\sum_{i\geq j}(2^{-i}\alpha^2_{-i})\lambda^1_{-i}+
4^j\sum_{i\geq j}2^{-i}\alpha^1_{-i}\alpha^2_{-i}\\
&= I(j)+II(j)+III(j).
\end{align*}
We claim that $I(j)\to\sigma_1$, $II(j)\to\sigma_2$ and $III(j)\to 0$ when $j\to+\infty$. Let us prove that $I(j)\to\sigma_1$, $j\to +\infty$. Since
\begin{equation*}
\abs{I(j)-\sigma_1}\leq \abs{4^j\sum_{i\geq j}2^{-i}\alpha^1_{-i}(\lambda^2_{-i}-1)}+\abs{4^jk^1_j-\sigma_1}
\end{equation*}
from the fact that $K_1\in\mathscr{K}^1$ with parameter $\sigma_1$ and because of (5.d) in Lemma~\ref{lemma:kerneldelta1} we have that $I(j)\to\sigma_1$ as $j\to\infty$. The fact $II(j)\to\sigma_2$ follows the same pattern. Let us finally estimate $III(j)$. Notice that from (4.a) en Lemma~\ref{lemma:kerneldelta1} we have
\begin{align*}
\abs{III(j)}&\leq 4^j\sum_{i\geq j}2^{-i}\abs{\alpha^1_{-i}}\abs{\alpha^2_{-i}}\\
&\leq 4^j\left(\sum_{i\geq j}2^{-i}\abs{\alpha^1_{-i}}\right)\left(\sum_{l\geq j}\abs{\alpha^2_{-l}}\right)\\
&= 4^j\left(\sup_{i\geq j}2^{-i}\abs{\frac{k^1_i-k^1_{i+1}}{2^{-i}}}\right)\left(\sum_{l\geq j}\abs{\alpha^2_{-l}}\right)\\
&\leq 2\,4^j\sup_{i\geq j}k^1_i\left(\sum_{l\geq j}\abs{\alpha^2_{-l}}\right)\\
&= 2\,4^j k^1_{i(j)}\left(\sum_{l\geq j}\abs{\alpha^2_{-l}}\right),
\end{align*}
where, since $k_i\to 0$ when $j\to \infty$, $i(j)\geq j$ is the necessarily attained supremum of the $k_i$'s for $i\geq j$. So that $4^jk^1_{i(j)}=4^{j-i(j)}4^{i(j)}k^1_{i(j)}$ is bounded above because $K_1\in\mathscr{K}^1$. On the other hand, since $\bar{\alpha}^2\in l^1(\mathbb{Z})$ the tail $\sum_{l\geq j}\abs{\alpha^2_{-l}}$ tends to zero as $j\to\infty$.

\textit{Proof of (c).} Since each $K_i$, $i=1,2$, can be regarded as the kernel of the operator $T_if(x)=\int_{y\in \mathbb{R}^+}K_i(x,y)f(y)dy$, $K_3$ is the kernel of the composition of $T_1$ and $T_2$, we have that
\begin{equation*}
T_3h=(T_2\circ T_1)h=T_2(T_1h)=T_2(\lambda^1(h)h)=\lambda^1(h)T_2h=\lambda^1(h)\lambda^2(h)h.
\end{equation*}
So $\lambda^1$ and $\lambda^2$ depend only on the scale $j$ of $h$, so does $\lambda^3=\lambda^1\lambda^2$.
\end{proof}

\begin{corollary}
Let $K\in\mathscr{K}^1$ with parameter $\sigma$, then for $n$ positive integer the kernel $K^n$ obtained as the composition of $K$ $n$-times, i.e.,
\begin{equation*}\label{coro:compositonKntimes}
K^{(n)}(x,y)=\idotsint_{(\mathbb{R}^+)^{n-1}}K(x,y_1)\cdots K(y_{n-1},y)dy_1\cdots dy_{n-1}
\end{equation*}
belongs to $\mathscr{K}^1$ with parameter $n\sigma$ and eigenvalues $\lambda^{(n)}_j=(\lambda_j)^n$, $j\in \mathbb{Z}$, with $\lambda_j$ the eigenvalues of $K$.
\end{corollary}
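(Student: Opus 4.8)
The proof of this corollary is essentially an induction on $n$ using part (b) and part (c) of the preceding lemma, together with the characterization of $1$-stability in terms of the coefficients $\bar{k}$. Here is the plan.

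\medskip

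First I would set up the induction. For $n=1$ there is nothing to prove: $K^{(1)}=K\in\mathscr{K}^1$ with parameter $\sigma$ and eigenvalues $\lambda_j$. Assume $K^{(n)}\in\mathscr{K}^1$ with parameter $n\sigma$ and eigenvalues $(\lambda_j)^n$. Since $K^{(n+1)}=K^{(n)}\ast K$ (associativity of $\ast$, which is part of $(\mathscr{K},\ast)$ being a semigroup), part (b) of the lemma immediately gives $K^{(n+1)}\in\mathscr{K}^1$, and, reading off the stability parameter from the proof of (b), its parameter is the sum of the parameters of $K^{(n)}$ and $K$, namely $n\sigma+\sigma=(n+1)\sigma$. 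For the eigenvalues, part (c) gives $\lambda^{(n+1)}_j=\lambda^{(n)}_j\,\lambda_j=(\lambda_j)^n\lambda_j=(\lambda_j)^{n+1}$, and these again depend only on the scale $j$ of $h$. This closes the induction.

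\medskip

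The only point that requires a word of care is that part (b) of the lemma, as stated, only asserts that $(\mathscr{K}^1,\ast)$ is a semigroup (i.e.\ closed under $\ast$), while what the corollary needs is the additive behavior of the stability parameter. But this additivity is exactly what is established inside the proof of (b): the computation there shows $4^j k^3_j\to\sigma_1+\sigma_2$, which is the statement that the parameter of $K_1\ast K_2$ is $\sigma_1+\sigma_2$. So I would simply invoke that computation with $K_1=K^{(n)}$ (parameter $n\sigma$ by the inductive hypothesis) and $K_2=K$ (parameter $\sigma$). No new estimate is needed.

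\medskip

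I do not expect any real obstacle here; the work has all been done in the lemma. The one thing to be careful about is bookkeeping: one must check that $K^{(n+1)}=K^{(n)}\ast K$ (rather than, say, $K\ast K^{(n)}$, which by the semigroup property gives the same kernel anyway since $\ast$ is associative and the formulae in (a) are symmetric in the two factors), and that the eigenvalue recursion from (c) is applied in the correct order. Both are routine. Thus the proof is a short induction: base case trivial, inductive step is (b) for membership and parameter and (c) for eigenvalues.
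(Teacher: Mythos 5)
Your proof is correct and takes the approach the paper itself intends: the corollary is an immediate induction on $n$ using parts (b) and (c) of the preceding lemma, with the additivity of the stability parameter read off from the computation $4^jk^3_j\to\sigma_1+\sigma_2$ inside the proof of (b). The paper omits an explicit proof precisely because the content is exactly this short induction, and your care in noting that the parameter's additivity is proved (not merely asserted) in (b) is the right remark to make.
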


Trying to keep the analogy with the classical CLT, the mollification operator, that we have to define, is expected to preserve $\mathscr{K}^1$ producing a contraction of the parameter $\sigma$ in order to counteract the dilation provided by the iteration procedure.

The first caveat that we have in our search for dilations is that, even when $\mathbb{R}^+$ is closed under (positive) dilations, the dyadic system is not. This means that usually $K(cx,cy)$ does not even belong to $\mathscr{K}$ when $K\in\mathscr{K}$ and $c>0$. Nevertheless, Lemma ~\ref{lemma:deltahomogeneity} in \S~\ref{sec:dyadycAnalysis} gives the answer. If $K(x,y)=\varphi(\delta(x,y))$ then $K_j(x,y)=2^jK(2^jx,2^jy)=2^jK(\delta(2^jx,2^jy))=2^j\varphi(2^j\delta(x,y))$ for every $j\in \mathbb{Z}$. Hence $K_j$ depends only on $\delta$. In the next lemma we summarize the elementary properties of this mollification operator.

\begin{lemma}\label{lemma:propertiesmollificationsK}
Let $K\in\mathscr{K}^1$ with parameter $\sigma$ be given. Then $K_j(x,y)=2^jK(2^jx,2^jy)$ belongs to $\mathscr{K}^1$ with parameter $2^{-j}\sigma$. Moreover, denoting with $\varphi^{(j)}$, $\bar{\alpha}^{j}=(\alpha^j_i: i\in \mathbb{Z})$ and $\bar{\lambda}^j=(\lambda^j_i: i\in \mathbb{Z})$ the corresponding functions and sequences for each $K_j$ we have that;
\begin{enumerate}[(a)]
\item $\varphi^{(j)}(s)=2^j\varphi(2^js)$, $j\in \mathbb{Z}$, $s>0$;
\item $\alpha^j_l=\alpha_{l-j}$, $j\in \mathbb{Z}$, $l\in \mathbb{Z}$;
\item $\lambda^j_l=\lambda_{l-j}$, $j\in \mathbb{Z}$, $l\in \mathbb{Z}$.
\end{enumerate}
\end{lemma}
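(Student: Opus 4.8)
The plan is to verify the three assertions by unwinding the definitions from Lemma~\ref{lemma:kerneldelta1} and using the dyadic homogeneity already established in Lemma~\ref{lemma:deltahomogeneity}. First I would record that since $K\in\mathscr{K}$ we have $K(x,y)=\varphi(\delta(x,y))$, and by Lemma~\ref{lemma:deltahomogeneity}, $\delta(2^jx,2^jy)=2^j\delta(x,y)$, so that
\begin{equation*}
K_j(x,y)=2^jK(2^jx,2^jy)=2^j\varphi(\delta(2^jx,2^jy))=2^j\varphi(2^j\delta(x,y)).
\end{equation*}
This already shows that $K_j$ depends only on $\delta(x,y)$, so defining $\varphi^{(j)}(s)=2^j\varphi(2^js)$ gives item (a), and $K_j(x,y)=\varphi^{(j)}(\delta(x,y))$. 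Nonnegativity of $K_j$ is inherited from $K$. To see that $K_j\in\mathscr{K}$ it remains to check the normalization $\int_{\mathbb{R}^+}K_j(x_0,y)\,dy=1$ for some $x_0$; this follows by the change of variables $z=2^jy$ (whose Jacobian $2^j$ cancels the prefactor $2^j$) from the normalization of $K$, or, equivalently, from item (b) once proved, since the normalization is equivalent to $\sum_l\alpha^j_l=1$ and reindexing $l\mapsto l-j$ preserves the sum.

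Next I would establish (b). Recalling from Lemma~\ref{lemma:kerneldelta1}(2) that $\varphi_l(s)=2^l\chi_{(0,2^{-l}]}(s)$ and $\varphi(s)=\sum_{l\in\mathbb{Z}}\alpha_l\varphi_l(s)$, the key observation is the dilation identity $\varphi_l(2^js)=2^{-j}\varphi_{l+j}(s)$, since $2^l\chi_{(0,2^{-l}]}(2^js)=2^l\chi_{(0,2^{-l-j}]}(s)=2^{-j}\bigl(2^{l+j}\chi_{(0,2^{-(l+j)}]}(s)\bigr)$. Therefore
\begin{equation*}
\varphi^{(j)}(s)=2^j\varphi(2^js)=2^j\sum_{l\in\mathbb{Z}}\alpha_l\varphi_l(2^js)=\sum_{l\in\mathbb{Z}}\alpha_l\varphi_{l+j}(s)=\sum_{m\in\mathbb{Z}}\alpha_{m-j}\varphi_m(s),
\end{equation*}
where the interchange of the scalar $2^j$ and the (absolutely convergent, since $\bar\alpha\in\ell^1$) series is trivial, and the last step is the reindexing $m=l+j$. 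By the uniqueness of the $\bar\alpha$-representation (the $\varphi_m$ being, up to normalization, linearly independent Haar-type functions, cf. the remark after Lemma~\ref{lemma:kerneldelta1}(3)), this forces $\alpha^j_m=\alpha_{m-j}$, which is (b). In particular $\bar\alpha^j\in\ell^1$ with $\sum_m\alpha^j_m=\sum_m\alpha_{m-j}=\sum_l\alpha_l=1$, completing the verification that $K_j\in\mathscr{K}$.

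For (c), one route is to invoke Theorem~\ref{thm:autovalores}: the eigenvalue $\lambda^j_l$ of $K_j$ on a Haar function of scale $l$ is $\Lambda^j_l$, and by formula (\ref{item:formulaerelated}.c), $\Lambda^j_l=\sum_{m>l}\alpha^j_m=\sum_{m>l}\alpha_{m-j}=\sum_{n>l-j}\alpha_n=\Lambda_{l-j}=\lambda_{l-j}$, giving $\lambda^j_l=\lambda_{l-j}$. Alternatively, and perhaps more transparently, one can compute directly: for a Haar function $h^l_k$ of scale $l$, the function $x\mapsto h^l_k(2^jx)$ is (up to the fixed normalizing constant $2^{-j/2}$) the Haar function of scale $l-j$ supported on $2^{-j}I^l_k\in\mathcal{D}^{l-j}$, so the substitution $y\mapsto 2^jy$ in $\int K_j(x,y)h^l_k(y)\,dy = 2^j\int K(2^jx,2^jy)h^l_k(y)\,dy$ turns the action of $K_j$ on scale-$l$ Haar functions into the action of $K$ on scale-$(l-j)$ Haar functions, yielding the same result. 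Finally, the parameter claim: $1$-stability is equivalent, by the remark at the end of Section~\ref{sec:stability}, to $4^m k_m\to\sigma$ as $m\to\infty$, and from (\ref{item:formulaerelated}.b) together with (b) one gets $k^j_m=\sum_{i\ge m}2^{-i}\alpha^j_{-i}=\sum_{i\ge m}2^{-i}\alpha_{-i-j}=\sum_{n\ge m+j}2^{-(n-j)}\alpha_{-n}=2^j k_{m+j}$; hence $4^m k^j_m=2^j\cdot 4^m k_{m+j}=2^{-m}\cdot 2^{m+j}\cdot\! \text{(wrong)}$—more carefully, $4^m k^j_m = 4^m\cdot 2^j k_{m+j} = 2^j\cdot 4^{-j}\cdot 4^{m+j}k_{m+j}=2^{-j}\,4^{m+j}k_{m+j}\to 2^{-j}\sigma$ as $m\to\infty$. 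Thus $K_j\in\mathscr{K}^1$ with parameter $2^{-j}\sigma$.

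I expect no serious obstacle here; the only point requiring a little care is being consistent with the index conventions (the sign flips between $k$-indexing and $\alpha$-indexing in Lemma~\ref{lemma:kerneldelta1}), so I would double-check the exponent bookkeeping in the stability-parameter computation, which is the one place a sign or a factor of $2^j$ versus $2^{-j}$ can easily slip.
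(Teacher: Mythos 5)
Your proof is correct and follows essentially the same approach as the paper; items (a)--(c) are handled identically via the dilation identity $\varphi_l(2^js)=2^{-j}\varphi_{l+j}(s)$ and the reindexing formula (\ref{item:formulaerelated}.c). The only divergence is in the stability parameter: the paper computes it in one line directly from the definition, using Lemma~\ref{lemma:deltahomogeneity} to write $\delta(x,y)^2K_j(x,y)=2^{-j}\delta(2^jx,2^jy)^2K(2^jx,2^jy)\to 2^{-j}\sigma$, whereas you route through the equivalent $4^mk_m\to\sigma$ characterization and first derive $k^j_m=2^jk_{m+j}$ from (b); both are valid, with the paper's route being shorter and independent of (b).
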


\begin{proof}
From the considerations above, it is clear that $K_j\in\mathscr{K}$. Now, for $j\in \mathbb{Z}$ fixed,
\begin{equation*}
\delta(x,y)^2K_j(x,y)=\delta(x,y)^2 2^j K(2^jx,2^jy)=2^{-j}\delta(2^jx,2^jy)^2K(2^jx,2^jy)
\end{equation*}
which tends to $2^{-j}\sigma$ when $\delta(x,y)\to\infty$. Property (a) is clear. Property (b) follows from (a);
\begin{align*}
\varphi^{(j)}(s)=2^j\varphi(2^js)=2^j\sum_{l\in \mathbb{Z}}\alpha_l\varphi_l(2^js)=\sum_{l\in \mathbb{Z}}\alpha_l\varphi_{l+j}(s)=\sum_{l\in \mathbb{Z}}\alpha_{l-j}\varphi_l(s).
\end{align*}
Hence $\alpha^j_l=\alpha_{l-j}$. Finally (c) follows from (b) and (4.c) in Lemma~\ref{lemma:kerneldelta1}.
\end{proof}

Corollary~\ref{coro:compositonKntimes} and Lemma~\ref{lemma:propertiesmollificationsK} show that for $K\in\mathscr{K}^1$ with parameter $\sigma$ if we iterate $K$, $2^i$-times ($i$ a positive integer) to obtain $K^{(2^i)}$  and then we mollify this kernel by a scale $2^i$, the new kernel $M^i$ belongs to $\mathscr{K}^1$ with parameter $\sigma$. Notice also that iteration and mollification commute, so that $M^i$ can be also seen as the $2^i$-th iteration of the $2^i$ mollification of $K$. Let us gather in the next statement the basic properties of $M^i$ that shall be used later, and follows from Corollary~\ref{coro:compositonKntimes} and Lemma~\ref{lemma:propertiesmollificationsK}.

\begin{lemma}
Let $K\in\mathscr{K}^1$ with parameter $\sigma$ and let $i$ be a positive integer. Then, the kernel $M^i\in\mathscr{K}^1$ with parameter $\sigma$ and $\lambda^i_j=\lambda^{2^i}_{j-i}$.
\end{lemma}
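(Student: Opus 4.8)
The plan is to obtain $M^i$ by composing two operations whose effect on the stability parameter and on the Haar eigenvalues has already been computed: first iterate $K$ exactly $2^i$ times, and then mollify the resulting kernel by the scale $2^i$. Since iteration and mollification commute — as one checks by writing $(K\ast K)_i(x,y)=2^i\int_{\mathbb{R}^+}K(2^ix,z)K(z,2^iy)\,dz$ and substituting $z=2^iw$, which turns the integral into $\int_{\mathbb{R}^+}K_i(x,w)K_i(w,y)\,dw=(K_i\ast K_i)(x,y)$ — the order of the two operations is immaterial; fixing one keeps the bookkeeping transparent.

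First I would invoke Corollary~\ref{coro:compositonKntimes}: since $K\in\mathscr{K}^1$ with parameter $\sigma$, its $2^i$-th iterate $K^{(2^i)}$ lies in $\mathscr{K}^1$, has parameter $2^i\sigma$, and has eigenvalues $\lambda^{(2^i)}_j=(\lambda_j)^{2^i}$ for every $j\in\mathbb{Z}$, where $(\lambda_j)$ are the eigenvalues of $K$.

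Next I would feed $K^{(2^i)}$ into Lemma~\ref{lemma:propertiesmollificationsK}, with the mollification index $j$ there played by $i$: mollifying a kernel of $\mathscr{K}^1$ of parameter $\tau$ by the scale $2^i$ produces a kernel of $\mathscr{K}^1$ of parameter $2^{-i}\tau$, and by part (c) its eigenvalue sequence is the shift $\mu_\ell\mapsto\mu_{\ell-i}$ of the old one. Taking $\tau=2^i\sigma$ and $\mu_\ell=(\lambda_\ell)^{2^i}$ yields at once that $M^i\in\mathscr{K}^1$ with parameter $2^{-i}\cdot 2^i\sigma=\sigma$ and that its eigenvalues are $\lambda^i_j=(\lambda_{j-i})^{2^i}=\lambda^{2^i}_{j-i}$, which is the asserted formula. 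As a consistency check one may run the operations in the opposite order — mollify $K$ by $2^i$ (eigenvalues $\lambda_{\ell-i}$, by Lemma~\ref{lemma:propertiesmollificationsK}(c)) and then iterate $2^i$ times (eigenvalues raised to the power $2^i$, by Corollary~\ref{coro:compositonKntimes}) — which gives the same $(\lambda_{j-i})^{2^i}$, in agreement with commutativity.

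There is no genuine obstacle here: the whole content resides in Corollary~\ref{coro:compositonKntimes} and Lemma~\ref{lemma:propertiesmollificationsK}. The only points requiring care are that the two operations scale the parameter by the reciprocal factors $2^i$ and $2^{-i}$, so that $M^i$ returns to parameter $\sigma$, and that one keeps the index convention of Lemma~\ref{lemma:propertiesmollificationsK}(c) straight so the eigenvalue shift appears as $j-i$ rather than $j+i$.
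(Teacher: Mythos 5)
Your proposal is correct and takes essentially the same route the paper does: the paper states the lemma as an immediate consequence of Corollary~\ref{coro:compositonKntimes} (iteration multiplies the parameter by $n$ and raises eigenvalues to the $n$-th power) and Lemma~\ref{lemma:propertiesmollificationsK} (mollification divides the parameter by $2^i$ and shifts the eigenvalue index by $i$), noting in passing that the two operations commute. Your explicit change-of-variables check for commutativity and the careful bookkeeping of the index shift match the intended argument.
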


\section{The main result}\label{sec:mainresult}
We are in position to state and prove the main result of this paper. In order to avoid a notational overload in the next statement, we shall use the notation introduced in the above sections.

\begin{theorem}\label{thm:mainresult}
Let $K$ be in $\mathscr{K}^1$ with parameter $\tfrac{2}{3}t>0$. Then
\begin{enumerate}[(a)]
\item the eigenvalues of $M^i$ converge to the eigenvalues of the kernel in \eqref{eq:NucleoHaarDifusiones} when $i\to+\infty$, precisely
\begin{equation*}
\lambda^{2^i}_{j-i}\to e^{-t2^j}, \textrm{ when } i\to\infty;
\end{equation*}
\item for $1<p<\infty$ and $u_0\in L^p(\mathbb{R}^+)$, the functions $v_i(x)=\int_{\mathbb{R}^+}M^i(x,y)u_0(y) dy$ converge in the $L^p(\mathbb{R}^+)$ sense to the solution $u(x,t)$ of the problem
\begin{equation*}
(P) \left
\{\begin{array}{ll}
\frac{\partial u}{\partial t}=D^{1} u,\, & x\in\mathbb{R}^{+}, t>0;\\

u(x,0)=u_0(x),\,  & x\in \mathbb{R}^+.
\end{array}
\right.
\end{equation*}
for the precise value of $t$ for which the initial kernel $K$ is $1$-stable with parameter $\tfrac{2}{3}t$.
\end{enumerate}
\end{theorem}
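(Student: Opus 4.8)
The plan is to prove part (a) first by a direct computation with the eigenvalues, and then deduce part (b) from (a) together with the $L^p$ characterization in Theorem~\ref{thm:characterizationLp} (equivalently Theorem~\ref{thm:op.Lp.haar}). For (a), recall that by the Lemma of Section~\ref{sec:iterationmollification} the eigenvalues of $M^i$ are $\lambda^i_j = \lambda^{2^i}_{j-i} = (\lambda_{j-i})^{2^i}$, where $(\lambda_n:n\in\mathbb{Z})$ are the eigenvalues of the fixed initial kernel $K\in\mathscr{K}^1$ with parameter $\tfrac23 t$. So I must show $(\lambda_{j-i})^{2^i}\to e^{-t2^j}$ as $i\to\infty$, for each fixed $j$. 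The key input is that $1$-stability of $K$ with parameter $\tfrac23 t$ translates, via the remark at the end of Section~\ref{sec:stability}, into $4^n k_n\to \tfrac23 t$ as $n\to\infty$, and via Lemma~\ref{lemma:kerneldelta1} into an asymptotic for $\Lambda_n=\lambda_n$ as $n\to+\infty$. The first step, then, is to extract from $4^nk_n\to\tfrac23 t$ the precise rate $1-\lambda_n \sim c\,2^{-n}$ as $n\to+\infty$; using formula (4.c), $\Lambda_n=\sum_{l>n}\alpha_l$ and $\alpha_l = 2^{-l}(k_{-l}-k_{-l+1})$... wait, one must be careful with index conventions. Cleaner: from Corollary~\ref{coro:representationK} the eigenvalue at scale $n$ is $\lambda_n = \Lambda_n$, and $1-\Lambda_n = \sum_{l\le n}\alpha_l$. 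Using the profile representation $\varphi(s)=\sum_l\alpha_l\varphi_l(s)$ and the computation in Lemma~\ref{lemma:stability23}, one gets $1-\lambda_n = \int$ over a small $\delta$-ball of $K$, which behaves like $\tfrac23 t\cdot 2^{-n}\cdot(\text{const})$; more precisely, summing the stability tail $\sum_{l\ge n}4^{-l}(\tfrac23 t)$-type contributions gives $1-\lambda_n = \tfrac{t}{3}\,2^{-n}(1+o(1))$ — actually the clean fact we need is simply $2^n(1-\lambda_n)\to \tfrac{t}{3}\cdot\text{(normalization)}$; let me instead recompute from $\Lambda^{(t)}_j = e^{-t2^j}$ reverse-engineering: when $K=K_t$ itself, $1-e^{-t2^{-n}} = t2^{-n}+O(4^{-n})$, so the target constant is $\lim 2^n(1-\lambda_n) = t$. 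So Step~1 is: show $2^n(1-\lambda_n)\to t$ as $n\to+\infty$, deriving this from $4^nk_n\to\tfrac23 t$ via the explicit relations of Lemma~\ref{lemma:kerneldelta1} exactly as the $\psi$-computation in Lemma~\ref{lemma:gaussianPsistability23} does for the model kernel.

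With Step~1 in hand, Step~2 is elementary: for fixed $j$, write $n=i-j$ and compute
\begin{equation*}
(\lambda_{i-j})^{2^i} = \bigl(1-(1-\lambda_{i-j})\bigr)^{2^i}
= \exp\!\Bigl(2^i\log\bigl(1-(1-\lambda_{i-j})\bigr)\Bigr).
\end{equation*}
Since $1-\lambda_{i-j} = t\,2^{-(i-j)}(1+o(1)) = t\,2^j\,2^{-i}(1+o(1))\to 0$, we have $\log(1-(1-\lambda_{i-j})) = -(1-\lambda_{i-j}) + O((1-\lambda_{i-j})^2)$, hence $2^i\log(\cdots) = -2^i(1-\lambda_{i-j}) + O(2^{-i}) \to -t\,2^j$, giving $(\lambda_{i-j})^{2^i}\to e^{-t2^j}$. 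This proves (a). One should also note $0\le\lambda_n\le 1$ for large $n$ (from (5.g) and monotonicity of the $\Lambda_n$ when $\overline k$ is eventually decreasing, or directly since $\lambda_n\to 0$ monotonically once the $\alpha_l$ are controlled), so the logarithm is legitimate for $i$ large.

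\textbf{Part (b).} By Theorem~\ref{thm:op.Lp.haar}, for $w\in L^p(\mathbb{R}^+)$ the $L^p$ norm of $\int M^i(x,y)w(y)\,dy$ is comparable, uniformly in $i$ (the constants in Theorem~\ref{thm:characterizationLp} are absolute), to $\bigl\|\bigl(\sum_h (\lambda^i(h))^2|\langle w,h\rangle|^2|I(h)|^{-1}\chi_{I(h)}\bigr)^{1/2}\bigr\|_p$, and likewise the solution $u(\cdot,t)$ of $(P)$ has multiplier symbol $e^{-t|I(h)|^{-1}}=e^{-t2^{-j(h)}}$... here I must double-check the sign of the exponent: in \eqref{eq:NucleoHaarDifusiones} the symbol is $e^{-t|I(h)|^{-1}}$, whereas (a) produces $e^{-t2^{j}}$ at scale $j$; since $|I(h)|=2^{j(h)}$ these match only if the scale index in (a) is $j=-j(h)$, which is consistent with $M^i$ being a $2^i$-mollification (shifting scales by $i$) — in any case the statement of (a) is what (b) uses verbatim, so I take the symbols as $\mu^i_h:=\lambda^{2^i}_{j(h)-i}$ and $\mu^\infty_h:=e^{-t2^{j(h)}}$ with $\mu^i_h\to\mu^\infty_h$ pointwise in $h$ by (a), and $0\le\mu^i_h,\mu^\infty_h\le 1$. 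Then $v_i-u(\cdot,t)$ is the image of $u_0$ under the multiplier with symbol $\mu^i_h-\mu^\infty_h$, so by Theorem~\ref{thm:op.Lp.haar}
\begin{equation*}
\|v_i-u(\cdot,t)\|_p \lesssim \Bigl\| \Bigl(\sum_{h\in\mathscr{H}} (\mu^i_h-\mu^\infty_h)^2\,|\langle u_0,h\rangle|^2\,|I(h)|^{-1}\chi_{I(h)}\Bigr)^{1/2}\Bigr\|_p.
\end{equation*}
To finish I would dominate the square function of $v_i-u$ by $2$ times the square function of $u_0$ (using $|\mu^i_h-\mu^\infty_h|\le 1$), which is in $L^p$ by Theorem~\ref{thm:characterizationLp}, and observe that $(\mu^i_h-\mu^\infty_h)^2\to 0$ for every $h$; then the Dominated Convergence Theorem — applied to the function of $x$ under the $L^p$ norm, dominated by the fixed $L^p$ function $\bigl(\sum_h|\langle u_0,h\rangle|^2|I(h)|^{-1}\chi_{I(h)}\bigr)^{1/2}$ — gives that this $L^p$ norm tends to $0$. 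Hence $\|v_i-u(\cdot,t)\|_p\to 0$, which is the claim. That $u(\cdot,t)$ as defined by \eqref{eq:NucleoHaarDifusiones} genuinely solves $(P)$ is the cited result from \cite{AcAimCzech}.

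\textbf{Main obstacle.} The only genuine content is Step~1: converting the ``at infinity'' stability $4^n k_n\to\tfrac23 t$ into the sharp first-order expansion $2^n(1-\lambda_n)\to t$ for the eigenvalues. This requires summing the relation between $\overline k$ and $\overline\Lambda$ (Lemma~\ref{lemma:kerneldelta1}, (4.c)/(4.e)/(4.f)) against the stability asymptotics and controlling the resulting tail — essentially a discrete Abel-summation argument identical in spirit to the computation $r^2\psi(r)\to\tfrac23$ in Lemma~\ref{lemma:gaussianPsistability23}, run in reverse. Everything after that (Step~2 and part (b)) is soft: a logarithm expansion and a dominated-convergence argument powered by the Littlewood–Paley/Haar square-function characterization of $L^p$.
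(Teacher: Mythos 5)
Your strategy coincides with the paper's: express the eigenvalue of $M^i$ at scale $j$ as the $2^i$-th power of the eigenvalue of $K$ at the shifted scale, extract from $1$-stability the first-order behavior of $1-\lambda$, and close with the $(1-x/n)^n\to e^{-x}$ limit; part (b) then follows from Theorem~\ref{thm:op.Lp.haar} and dominated convergence exactly as you sketch. However, there are two issues. First, you leave the crucial Step~1 --- the conversion of $4^n k_n\to\tfrac23 t$ into the sharp eigenvalue asymptotic --- as an acknowledged ``main obstacle'' rather than proving it. This is precisely where the paper does real work: using (4.e) of Lemma~\ref{lemma:kerneldelta1}, it writes $\lambda^{2^i}_{j-i}=\bigl[1-\gamma(i,j)2^{j-i}\bigr]^{2^i}$ with $\gamma(i,j)=\sum_{m\ge 0}2^{-m-1}\bigl(k_{i+m-j}4^{i+m-j}\bigr)+\tfrac12 k_{i-j}4^{i-j}$, and then passes the limit through the weighted sum (a dominated-convergence step, since $k_m4^m$ is bounded) to get $\gamma\to\tfrac23 t\cdot 1+\tfrac12\cdot\tfrac23 t=t$. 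Without this computation, or an equivalent one, your proof is incomplete at its core.

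Second, your index conventions are inconsistent and, taken literally, false. The eigenvalue of $M^i$ at scale $j$ is $(\lambda_{j-i})^{2^i}$, and as $i\to\infty$ the index $j-i\to-\infty$, where (5.d) gives $\Lambda_{j-i}\to 1$. You instead write $(\lambda_{i-j})^{2^i}$ and ``$2^n(1-\lambda_n)\to t$ as $n\to+\infty$''; but $\lambda_n\to 0$ as $n\to+\infty$ by (5.e), so $2^n(1-\lambda_n)\to\infty$ and $(\lambda_{i-j})^{2^i}\to 0$, not $e^{-t2^j}$. The correct statement is $2^n(1-\lambda_{-n})\to t$ as $n\to+\infty$, equivalently $2^{-m}(1-\Lambda_m)\to t$ as $m\to-\infty$. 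Relatedly, the hesitation about the exponent sign in \eqref{eq:NucleoHaarDifusiones} stems from the same confusion: with the paper's convention $|I^j_k|=2^{-j}$ one has $|I(h)|^{-1}=2^{j(h)}$, so $e^{-t|I(h)|^{-1}}=e^{-t2^{j(h)}}$ and there is no discrepancy with (a). Part (b) as you present it is fine once (a) is established (the uniform bound $|\mu^i_h-\mu^\infty_h|\le 2$ plus the fixed $L^p$ dominating square function of $u_0$ does carry the dominated-convergence argument, matching the paper).
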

\begin{proof}[Proof of (a)]
Since $K\in\mathscr{K}^1$ with parameter $\tfrac{2}{3}t>0$, which means that $k_m4^m\to\tfrac{2}{3}t$ as $m$ tends to infinity we have both that $k_m2^m\to 0$ when $m\to\infty$ and that $\sum_{l<m}k_l2^{l-1}<1$ for every positive integer $m$. Since, on the other hand $\sum_{l\in \mathbb{Z}}k_l2^{l-1}=1$, we have for $j\in \mathbb{Z}$ fixed and $i$ a large nonnegative integer that
\begin{equation*}
0<\sum_{l<i-j}k_l2^{l-1}- \frac{k_{i-j}2^{i-j}}{2}<1.
\end{equation*}
Hence, from Lemma~\ref{lemma:propertiesmollificationsK} and Lemma~\ref{lemma:kerneldelta1}, the $j$-th scale eigenvalues of the operator induced by the kernel $M^i$ ar given by
\begin{align*}
\lambda^{2^i}_{j-i}&=\left[\frac{1}{2}\left(\sum_{l<i-j}k_l2^l-k_{i-j}2^{i-j}\right)\right]^{2^i}\\
&=\left[\sum_{l<i-j}k_l2^{l-1}-k_{i-j}\frac{2^{i-j}}{2}\right]^{2^i}\\
&=\left[1-\left(\sum_{l\geq i-j}k_l2^{l-1}+\frac{k_{i-j}4^{i-j}}{2}\frac{2^j}{2^i}\right)\right]^{2^i}\\
&= \left[1-\gamma(i,j)\frac{2^j}{2^i}\right]^{2^i},
\end{align*}
with $\gamma(i,j)=2^{i-j}\sum_{l\geq i-j}k_l2^{l-1}+\frac{k_{i-j}4^{i-j}}{2}$. Notice that
\begin{equation*}
\gamma(i,j)=2^{i-j}\sum_{l\geq i-j}2^{-l-1}(k_l4^l)+\frac{k_{i-j}4^{i-j}}{2}=\sum_{m=0}^{\infty}2^{-m-1}(k_{i+m-j}4^{i+m-j})+\frac{k_{i-j}4^{i-j}}{2},
\end{equation*}
which tends to $t>0$ when $i\to\infty$. With these remarks we can write
\begin{equation*}
\lambda^{2^i}_{j-i}=\left(\left[1-\frac{\gamma(i,j)2^j}{2^i}\right]^{\tfrac{2^i}{\gamma(i,j)2^j}}\right)^{\gamma(i,j)2^j}
\end{equation*}
which tends to $e^{-t2^j}$ when $i$ tends to infinity.

\textit{Proof of (b).}
The function $v_i(x)-u(x,t)$ can be seen as the difference of two operators $T_i$ and $T^t_{\infty}$ acting on the initial condition,
\begin{equation*}
v_i(x)=T_iu_0(x)=\int_{y\in \mathbb{R}^+}M^i(x,y)u_0(y) dy
\end{equation*}
and
\begin{equation*}
u(x,t)=T^t_{\infty}u_0(x)=\int_{y\in \mathbb{R}^+}K(x,y;t)u_0(y)dy.
\end{equation*}
Since the eigenvalues of $T_i-T^t_\infty$ are given by $\lambda^{2^i}_{j(h)-i}-e^{-t2^{j(h)}}$, for each $h\in\mathscr{H}$, from Theorem~\ref{thm:op.Lp.haar} in Section~\ref{sec:spectralanalysis} we have
\begin{equation*}
\norm{v_i-u(\cdot,t)}_{L_p(\mathbb{R}^+)}\leq C_1\biggl\|\biggl(\sum_{h\in\mathscr{H}}\abs{\lambda^{2^i}_{j(h)-i}-e^{-t2^{j(h)}}}^2\abs{\proin{u_0}{h}}^2
\abs{I(h)}^{-1}\chi_{I(h)}(\cdot)\biggr)^{1/2}\biggr\|_{L_p(\mathbb{R}^+)}.
\end{equation*}
From (5.g) and (5.h) in Lemma~\ref{lemma:kerneldelta1} we have that the sequence $\lambda^{2^i}_{j(h)-i}$ is uniformly bounded. On the other hand, since $\norm{\bigl(\sum_{h\in\mathscr{H}}\abs{\proin{u_0}{h}}^2
{\abs{I(h)}}^{-1}\chi_{I(h)}(\cdot)\bigr)^{1/2}}_{L_p(\mathbb{R}^+)}\leq C_2\norm{u_0}_{L^p(\mathbb{R}^+)}<\infty$, we can take the limit for $i\to+\infty$ inside the $L^p$-norm and the series in order to get that $\norm{v_i-u(\cdot,t)}_{L_p(\mathbb{R}^+)}\to 0$ when $i\to+\infty$.
\end{proof}



\def\cprime{$'$}
\providecommand{\bysame}{\leavevmode\hbox to3em{\hrulefill}\thinspace}
\providecommand{\MR}{\relax\ifhmode\unskip\space\fi MR }
\providecommand{\MRhref}[2]{%
  \href{http://www.ams.org/mathscinet-getitem?mr=#1}{#2}
}
\providecommand{\href}[2]{#2}


\bigskip

\bigskip
\noindent{\footnotesize
\textsc{Instituto de Matem\'atica Aplicada del Litoral, UNL, CONICET}

\smallskip
\noindent\textmd{CCT CONICET Santa Fe, Predio ``Dr. Alberto Cassano'', Colectora Ruta Nac.~168 km 0, Paraje El Pozo, S3007ABA Santa Fe, Argentina.}
}
\bigskip

\end{document}